\newtheorem{theorem}{Theorem}
\theoremstyle{definition}
\newtheorem{lemma}{Lemma}
\numberwithin{equation}{section}
\numberwithin{theorem}{section}
\numberwithin{definition}{section}
\numberwithin{lemma}{section}
\numberwithin{table}{section}
\numberwithin{figure}{section}
\newcommand{\pr}{\partial}
\newcommand{\vph}{\varphi}
\newcommand{\hra}{\hookrightarrow}
\newcommand{\ra}{\rightarrow}
\newcommand{\bR}{\mathbb{R}}
\newcommand{\bC}{\mathbb{C}}
\newcommand{\bE}{\mathbb{E}}
\newcommand{\cq}{\mathcal{q}}
\newcommand{\cV}{\mathcal{V}}
\newcommand{\cH}{\mathcal{H}}
\newcommand{\ca}{\mathcal{a}}
\newcommand{\cA}{\mathcal{A}}
\newcommand{\cT}{\mathcal{T}}
\newcommand{\cB}{\mathcal{B}}
\newcommand{\cC}{\mathcal{C}}
\newcommand{\cx}{\mathcal{x}}
\newcommand{\cy}{\mathcal{y}}
\newcommand{\cL}{\mathcal{L}}
\newcommand{\cU}{\mathcal{U}}
\newcommand{\cS}{\mathcal{S}}
\newcommand{\cI}{\mathcal{I}}
\newcommand{\cP}{\mathcal{P}}
\newcommand{\cJ}{\mathcal{J}}
\newcommand{\cu}{\mathcal{u}}
\newcommand{\cv}{\mathcal{v}}
\newcommand{\cw}{\mathcal{w}}
\newcommand{\cR}{\mathcal{R}}
\DeclareMathOperator*{\argmin}{arg\,min}
\begin{document}

\title[]{Deconvolving the Input to Random Abstract Parabolic Systems; A Population Model-Based Approach to Estimating Blood/Breath Alcohol Concentration from Transdermal Alcohol Biosensor Data$^\dagger$}

\author{Melike Sirlanci}
\address{Modeling and Simulation Laboratory, Department of Mathematics, University of Southern California}
\ead{sirlanci@usc.edu}

\author{Susan E. Luczak}
\address{Department of Psychology, University of Southern California}
\ead{luczak@usc.edu}

\author{Catharine E. Fairbairn}
\address{Department of Psychology, University of Illinois Urbana-Champaign}
\ead{cfairbai@illinois.edu}

\author{Konrad Bresin}
\address{Department of Psychology, University of Illinois Urbana-Champaign}
\ead{bresin2@illinois.edu}

\author{Dahyeon Kang}
\address{Department of Psychology, University of Illinois Urbana-Champaign}
\ead{dkang38@illinois.edu}

\author[cor1]{I. G. Rosen$^*$}
\address{Modeling and Simulation Laboratory, Department of Mathematics, University of Southern California}
\ead{grosen@math.usc.edu}

\begin{abstract}
The distribution of random parameters in, and the input signal to, a distributed parameter model with unbounded input and output operators for the transdermal transport of ethanol are estimated. The model takes the form of a diffusion equation with the input, which is on the boundary of the domain, being the blood or breath alcohol concentration (BAC/BrAC), and the output, also on the boundary, being the transdermal alcohol concentration (TAC). Our approach is based on the reformulation of the underlying dynamical system in such a way that the random parameters are treated as additional spatial variables. When the distribution to be estimated is assumed to be defined in terms of a joint density, estimating the distribution is equivalent to estimating a functional diffusivity in a multi-dimensional diffusion equation. The resulting system is referred to as a population model, and well-established finite dimensional approximation schemes, functional analytic based convergence arguments, optimization techniques, and computational methods can be used to fit it to population data and to analyze the resulting fit. Once the forward population model has been identified or trained based on a sample from the population, the resulting distribution can then be used to deconvolve the BAC/BrAC input signal from the biosensor observed TAC output signal formulated as either a quadratic programming or linear quadratic tracking problem. In addition, our approach allows for the direct computation of corresponding credible bands without simulation. We use our technique to estimate bivariate normal distributions and deconvolve BAC/BrAC from TAC based on data from a population that consists of multiple drinking episodes from a single subject and a population consisting of single drinking episodes from multiple subjects.
\end{abstract}

\pacs{02.30, 02.60}
\vspace{2pc}
\noindent{\it Keywords}: Distributed parameter systems, Random abstract parabolic systems, Population model, Linear semigroups of operators, System identification, Deconvolution, Transdermal alcohol biosensor.

\submitto{\IP}
\vspace{1pc}
\noindent{$\dagger$ This research was supported in part by grants from the Alcoholic Beverage Medical Research Foundation and the National Institute of Alcohol Abuse and Alcoholism. (R21AA17711, S.E.L. and I.G.R.), and (R01AA025969,  C.E.F.).  

\noindent$^*$Corresponding author.}

\section{Introduction}\label{intro}

\indent

Alcohol researchers and clinicians have long recognized the potential value of being able to monitor alcohol consumption levels in naturalistic settings. The ability to do so could advance understanding of individual differences in how people choose to drink, respond to alcohol, and behave while drinking, and how patterns and quantities of consumption relate to social versus problematic drinking. There are not adequate methods, however, for passively recording naturalistic drinking in ways that produce accurate quantitative data. Biosensors that measure transdermal alcohol concentration (TAC), the amount of alcohol diffusing through the skin, have been available for several decades and have the potential for passively collecting quantitative levels of alcohol \cite{Swift2000,Swift1993,Swift1992}. Their efficacy is based on the observation that the concentration of ethanol in perspiration, to some extent, correlates with the level of alcohol concentration in the blood \cite{Palmlov1936}. These devices have been used effectively to monitor whether individuals consume any alcohol (e.g., for court-mandated monitoring of abstinence  \cite{Swift2003}). The breath analyzer, which is based on a simple principle from elementary chemistry, Henry's Law \cite{Lab1990}, is reasonably robust and consistent across individuals and ambient conditions thus allowing for the straight forward conversion of breath alcohol concentration (BrAC) to blood alcohol concentration (BAC).  However, because there are variations in the rate at which alcohol diffuses through the skin across individuals and within individuals across environmental conditions, it is challenging to meaningfully interpret TAC levels quantitatively. To wit, TAC levels do not consistently correlate with BrAC or BAC, which are the standard measures of alcohol level intoxication among alcohol researchers and clinicians, and follow a relatively consistent relationship to one another across individuals and environmental conditions. Because raw TAC data does not consistently map directly onto BrAC/BAC across individuals and drinking episodes, alcohol researchers and clinicians have yet to incorporate TAC devices as a fundamental tool in their work. 

Over the past decade, our research team (and others, see, for example, \cite{Dougherty1,Dougherty2,Dougherty3}) has been developing methods to address this conversion problem and produce reliable quantitative estimates of BrAC/BAC (eBrAC/eBAC) from TAC data. To date, we have taken a strictly deterministic approach to converting TAC to eBrAC/eBAC. We created a two-step system that used individual calibration data (i.e., simultaneously-collected breath analyzer BrAC measurements and biosensor TAC measurements) to fit first principles physics/physiological-based models to capture the propagation of alcohol from the blood, through the skin, and its measurement by the TAC sensor (i.e. the forward model). We then deconvolved eBrAC/eBAC from TAC measurements for all other drinking episodes without requiring any additional BrAC measurements. This procedure has produced good results (e.g. \cite{Dai2016,Dum2008,Rosen2014}), and has been used in alcohol related consumption and behavioral studies. Indeed, in \cite{LRWAA} drinking patterns in individuals with and without alcohol metabolizing genetic variants were investigated, and in \cite{Fair} TAC sensors together with our algorithms and software are used to convert the TAC data collected in the field to eBrAC in order to investigate the relationship between social familiarity and alcohol reward in naturalistic drinking settings and compared this to alcohol reward observed in laboratory drinking settings. However, the results of these preliminary studies using this technology also indicate that some of the dynamics of the system are not being captured by the models. In addition, the calibration protocol has limitations, including that the procedure for collecting the individual calibration data is burdensome for both researchers and participants, and that it is not always feasible to conduct (e.g., for clinicians and lay individuals who do not have access to an alcohol administration laboratory or with patients who are trying to abstain). These drawbacks significantly reduce the feasibility of using these devices. Thus, we have been investigating ways to eliminate the need to calibrate the sensor’s data analysis system to each individual, each sensor, and across varying ambient environmental conditions, by better capturing any un-modeled dynamics of the system.  

In a series of recent studies (e.g. \cite{AUTO,SLR17,JMAA}), we have investigated the construction of a population model, which uses our first principles models to describe the dynamics common to the entire population (i.e., all individuals, devices, and environmental conditions) and then to attribute all un-modeled sources of uncertainty observed in individual data (e.g., variations in human physiology, biosensor hardware, environmental conditions) to random effects. We assume that there is a single underlying mathematical framework that describes the system dynamics that are common to all individuals, environments, and devices in the population (e.g., the physics-based model for the transport of alcohol from the blood, through the skin, and measurement by the sensor), but that individuals in the population exhibit variation in the model parameters (e.g., the rate at which the alcohol is transported, evaporates, etc.). We assume that the sensor measures the sum or mean of all of these effects. We refer to the underlying first principles physics based model with random parameters combined with the distribution of these parameters (in the form of parameterized  families of probability measures or, more precisely, joint probability density functions) based on a sample of training data from the population as our population model. In \cite{JMAA} we developed the abstract approximation and convergence theory for fitting or training the population model. In \cite{AUTO} we applied the theory developed in \cite{JMAA} to the alcohol biosensor problem discussed above. In this paper, we are concerned with using the fit population model to deconvolve or an estimate for the input to the model, i.e. the BrAC signal, from the output of the model, i.e. the TAC signal, for an individual who is a member of the population but was not included in the training data set.  In addition to estimating the BrAC based on the TAC, borrowing terminology from Bayesian theory, we want to use the distribution of the random parameters to obtain credible bands for the estimated BrAC. That is, 100$\alpha$ percent credible band is a region surrounding the estimated BrAC signal for which the probability that the true BrAC signal lies in that region is at least $\alpha$.  Our general approach is based on two recent papers on the theory of random abstract parabolic systems (\cite{GAS,SG}) and on an abstract framework for uncertainty quantification and the estimation of probability measures from data for random dynamical systems \cite{BT}.   

An outline of the remainder of the paper is as follows.  In Section 2 we develop a model for the transdermal transport of ethanol and its measurement by the biosensor in the form of an initial-boundary value problem for a diffusion equation with input and output on the boundary.  The input to the system is BrAC and the output is the biosensor measured TAC.  In Section 3 we consider abstract parabolic systems with unbounded input and output operators with random parameters and show how, using the ideas from \cite{GAS}, they can be reformulated as deterministic abstract parabolic systems in appropriately constructed Bochner spaces. Our population model is of this form. In Section 4 we formulate the problem of training or fitting the population model and briefly review our finite dimensional approximation and convergence results from \cite{AUTO} and \cite{JMAA}, as they are fundamental to our approximation and convergence results for the input estimation or deconvolution problem, which is the focus of this paper and in particular, Section 5.  In Section 6, we discuss the matrix representations of the various operators and functionals that are central to our abstract framework.  In Section 7 we present numerical results for two examples involving actual experimental/clinical data.  In one example, we work with a data set consisting of a number of drinking episodes collected from a single subject.   In the second example, we consider data from drinking episodes collected from multiple subjects.  A final eighth section has some discussion and analysis of the results presented in Section 7 along with a number of concluding remarks.

\section{A Mathematical Model of Transdermal Alcohol Transport}\label{alc.trans}

\indent

In this section, we will derive the system of mathematical equations that we use to model the transport of ethanol through the skin.
Let $t$ be the temporal variable and let $\eta$ be the spatial variable. Let $\vph(t,\eta)$ represent the concentration of ethanol in units of moles/cm$^2$ in the epidermal layer of the skin at time $t$ seconds and depth $\eta$ cm. We consider the following system 
\begin{align}
    \frac{\pr\vph}{\pr t}(t,\eta)&=D\frac{\pr^2\vph}{\pr\eta^2}(t,\eta), \ \ 0<\eta<L, \ t>0,\label{eq2.1}\\
    D\frac{\pr\vph}{\pr\eta}(t,0)&=\alpha\vph(t,0), \ \ t>0,\label{eq2.2}\\
    D\frac{\pr\vph}{\pr\eta}(t,L)&=\beta u(t), \ \ t>0,\label{eq2.3}\\
    \vph(0,\eta)&=0, \ \ 0<\eta<L,\label{eq2.4}\\
    y(t)&=\gamma\vph(t,0), \ \ t>0\label{eq2.5},
\end{align}
where the one dimensional diffusion equation \eqref{eq2.1} represents the diffusion of ethanol through the epidermal layer of skin (which does not contain any blood vessels) with thickness $L$ cm and $D$ being the diffusivity coefficient in units of cm$^2$/sec. The boundary condition \eqref{eq2.2} represents the evaporation of ethanol on the skin surface, $\eta=0$, where $\alpha>0$ is the proportionality constant in cm/sec units. The boundary condition \eqref{eq2.3} models the transport of ethanol between the epidermal layer and the dermal layer (which does have blood vessels), where the proportionality constant $\beta>0$ is in units of moles/(cm$\times$sec$\times$BAC/BrAC units), since the input, $u(t)$, is in BAC/BrAC units denoting the ethanol concentration in the blood or exhaled breath. The initial condition \eqref{eq2.4} reflects our assumption that there is no alcohol in the epidermal layer at time $t=0$. Finally, equation \eqref{eq2.5} is called the observation equation and it denotes that the relationship between the TAC sensor reading and the ethanol concentration on the skin surface is linear with a proportionality constant, $\gamma>0$, in units of (TAC units$\times$cm$^2$)/moles, since $y(t)$ is measured in TAC units.

It is possible to convert the system \eqref{eq2.1}-\eqref{eq2.5} into an equivalent one that has only two dimensionless parameters instead of the five parameters, $D$, $L$, $\alpha$, $\beta$, and $\gamma$. These new parameters will be denoted by the vector $q=[q_1,q_2]$ (see \cite{AUTO}) and the system is stated as
\allowdisplaybreaks
\begin{align}
    \frac{\pr\vph}{\pr t}(t,\eta)&=q_1\frac{\pr^2\vph}{\pr\eta^2}(t,\eta), \ \ 0<\eta<1, \  t>0,\label{eq2.6}\\
    q_1\frac{\pr\vph}{\pr\eta}(t,0)&=\vph(t,0), \ \ t>0,\label{eq2.7}\\
    q_1\frac{\pr\vph}{\pr\eta}(t,1)&=q_2u(t), \ \ t>0,\label{eq2.8}\\
    \vph(0,\eta)&=\vph_0, \ \ 0<\eta<1,\label{eq2.9}\\
    y(t)&=\vph(t,0), \ \ t>0,\label{eq2.10}
\end{align}
which is an initial-boundary value problem for a one dimensional diffusion equation with input and output on the boundary where the names of the state, input, and output variables have been kept the same.  Note that we assume that there is no alcohol in the skin at time $t=0$, and consequently $\vph_0=0$.

\section{Random Abstract Parabolic Systems}\label{rand.par.sys}

\indent

In this section we reformulate the system given in \eqref{eq2.6}-\eqref{eq2.10} abstractly in a functional analytic/operator theoretic Gelfand triple setting that will allow us to develop a framework wherein (1) the parameters $q=[q_1,q_2]$ can in a very natural way be considered to be random variables, (2) the distributions of these random parameters can be estimated from data using deterministic techniques, and (3) once the distribution of the random parameters have been estimated, the input signal, $u(t)$, can be estimated based on observations of the output signal, $y(t)$, along with what we refer to as credible bands that quantify the uncertainty in the input that results from the uncertainty in the parameters.  In (2) and (3) above, this includes the development of finite dimensional approximation schemes, rigorous convergence results, and computational/numerical algorithms.       

We assume that we have the Gelfand triple $V\hra H\hra V^*$ with the pivot space $H$ and $V^*$ being the topological dual of $V$. Let $\left<\cdot,\cdot\right>$ denote the $H$ inner product and $|\cdot|$, $||\cdot||$ denote the norms on $H$ and $V$, respectively. Let $Q\subseteq\bR^p$ denote the set of admissible parameters, let $d_Q$ denote a metric on $Q$, and assume that $Q$ is compact with respect to $d_Q$. For $q\in Q$, let $a(q;\cdot,\cdot):V\times V\ra\bC$ be a bounded and coercive (both, uniformly in $q$, for $q$ in the compact set $Q$) sesquilinear form. (The $\lambda_0$-shifted form $a(q;\cdot,\cdot) +\lambda_0|\cdot|^2$ coercive for some $\lambda_0 \in \mathbb{R}$ is fine as well.) We also assume that for each $\varphi,\psi \in V$, the function $q \rightarrow a(q;\varphi,\psi)$ is measurable with respect to all measures $\pi(\theta)$, in some family of measures parameterized by a vector of parameters $\theta$, where $\theta \in \Theta\subset\mathbb{R}^r$ for some positive integer $r$.  This family of measures and its parameterization will be made more precise below (see also \cite{AUTO}). 

Under these conditions, for each $q \in Q$, the sesquilinear form $a(q;\cdot,\cdot)$ defines a bounded linear operator $A(q):V\ra V^*$ by $\left<A(q)\psi_1,\psi_2\right>_{V^*,V}=-a(q;\psi_1,\psi_2)$, $\psi_1,\psi_2\in V$, where $\left<\cdot,\cdot\right>_{V^*,V}$ denotes the duality pairing which is the extension via continuity of the $H$ inner product from $H\times V$ to $V^*\times V$. By appropriately restricting the domain of the operator $A(q)$, it is possible to consider it as an unbounded linear operator on $H$ or $V^*$. Moreover, it can also be shown that the operator $A(q)$ defined above is the infinitesimal generator of an analytic or holomorphic semigroup, $\left\{T(t;q)=e^{A(q)t}, t\geq0\right\}$, of bounded linear operators on $V$, $H$, or $V^*$ (see \cite{BK,BI,TNB}).

The fact that the input $u$ and output $y$ in the system \eqref{eq2.6}-\eqref{eq2.10} are on the boundary of the domain, requires that some care be exercised in the definitions of the input and output operators in our formulation of the problem (see \cite{AUTO}). Let $b(q),c(q)$ denote elements in $V^*$ with the respective maps $q\mapsto<b(q),\psi>_{V^*,V}$ and $q\mapsto<c(q),\psi>_{V^*,V}$ measurable on $Q$ with respect to all measures $\pi(\theta)$ for $\psi \in V$, where $<\cdot,\cdot>_{V^*,V}$ again denotes the duality pairing between $V$ and $V^*$. We assume further that $||b(q)||_{V^*}$, $||c(q)||_{V^*}$ are uniformly bounded for a.e. $q\in Q$.  Then, for $q\in Q$, define the operators $B(q):\mathbb{R}\rightarrow V^*$ by $\left<B(q)u,\varphi\right>_{V^*,V}=\left<b(q),\varphi\right>_{V^*,V}u$ and $C(q):L_2([0,T],V)\rightarrow\mathbb{R}$ by $C(q)\psi=\int_{0}^{T}\left<c(q),\psi(s)\right>_{V^*,V}ds$, for $u\in\mathbb{R}$, $\varphi\in V$, and $\psi \in L_2([0,T],V)$, and consider the input/output state space system given by
\begin{align}
    \dot{x}(t)&=A(q)x(t)+B(q)u(t), \ \ t>0,\label{eq3.1}\\
    x(0)&=x_0\in H,\label{eq3.2}\\
    y(t)&=C(q)x(t),\label{eq3.3}
\end{align}
where $u\in L_2(0,T)$ is the input, $y(t)$ is the output, and $x(t)=\vph(t,\cdot)$ is the state variable. Then by applying the theory for infinite dimensional control systems with unbounded input and output and, in particular, systems described by PDEs with input and output on the boundary of the domain, developed in, for example,  \cite{CS} and \cite{PS}, the mild solution of \eqref{eq3.1}-\eqref{eq3.2} can be written as
\begin{align}
    x(t;q)=T(t;q)x_0+\int_0^t T(t-s;q)B(q)u(s)ds, \ \ t\geq0,\label{eq3.4}
\end{align}
where the state $x$ is in $W(0,T)=\left\{\psi:\psi\in L_2(0,T,V), \dot{\psi}\in L_2(0,T,V^*)\right\}$ and depends continuously on $u$ (\cite{LJL}).

Our model for transdermal alcohol transport described by the system \eqref{eq2.6}-\eqref{eq2.10} can be put in the abstract form given by the system of equations \eqref{eq3.1}-\eqref{eq3.3} by making the following identifications. We let $H=L_2(0,1)$ and $V=H^1(0,1)$ with their standard inner products and the corresponding norms. Consequently we have the continuous and dense embeddings, and hence the Gelfand triple $H^{1}(0,1) \hra L_2(0,1) \hra  H^{-1}(0,1)$. We define the sesquilinear form $a(q;\cdot,\cdot): V \times V \rightarrow \mathbb{R}$ as
\begin{align}
    a(q;\psi_1,\psi_2)=\psi_1(0)\psi_2(0)+q_1\int_0^1\psi_1^{'}(x)\psi_2^{'}(x)dx, \ \ \psi_1,\psi_2\in V,\label{eq3.5}
\end{align}
and the operators $B(q)$ and $C(q)$ as
\begin{align}
    \left<B(q)u,\psi\right>_{V^*,V}=q_2\psi(1)u, \ \ \text{and} \ \ C(q)\psi=\psi(0),\label{eq3.6}
\end{align}
for $\psi\in V$. With these definitions, it is not difficult to show that the boundedness, coercivity, and measurability assumptions on $a(q;\cdot,\cdot)$, $b(q)$, and $c(q)$ are satisfied. A more detailed description of the equaivalence between the model \eqref{eq2.6}-\eqref{eq2.10} and the abstract system \eqref{eq3.1}-\eqref{eq3.3} can be found in \cite{JMAA} and \cite{AUTO}.

In order to include the effects of the un-modeled physiological and environmental factors that were described in the introduction, we now make the assumption that the parameters, $q$, take the form of a random vector which we denote by the $p$-dimensional random vector $\cq$. We assume that $\cq$ has support $Q_0=\prod_{i=1}^p[a^0_i,b^0_i]$ where $-\infty<\bar{\alpha}<a^0_i<b^0_i<\bar{\beta}<\infty$, and has distribution described by a probability measure $\pi_0$ or distribution function $F_0$.  

Now with the parameters $q$ replaced by the random vector $\cq$, the operators $A(\cq)$, $B(\cq)$, $C(\cq)$, and the semigroup $\{T(t;q):t \geq 0\}$ all become random, as does the abstract system given in \eqref{eq3.1}-\eqref{eq3.3} and its solution given in \eqref{eq3.4}.  However, by relying on some recent ideas presented in \cite{GAS} and \cite{SG}, we are able to reformulate the now abstract random system \eqref{eq3.1}-\eqref{eq3.3} as a deterministic system wherein the randomness is embedded in the underlying abstract spaces.  Moreover, this new system continues to be abstract parabolic in that the underlying state transition dynamics are given by a bounded coercive sesquilinear from.  Consequently, a holomorphic semigroup based representation for this new system analogous to the one described above and given by equations \eqref{eq3.1}-\eqref{eq3.3}, \eqref{eq3.5}, and \eqref{eq3.6} can be obtained.  In this way the underlying randomness in the system effectively becomes  invisible, thus allowing us to develop parameter estimation and deconvolution schemes and finite dimensional approximation and convergence theories exactly as we would in the deterministic case.  Indeed, the problem of estimating the distribution of the random vector $\cq$ now becomes essentially the same as estimating a spatial variable dependent diffusivity in a conventional diffusion or heat equation, albeit one with a higher dimensional spatial domain.

To see how this works, define the Bochner spaces $\cV=L_{\pi}^2(Q;V)$ and $\cH=L_{\pi}^2(Q;H)$ which form the Gelfand triple $\cV\hra\cH\hra\cV^*$ with the pivot space $\cH$ and identification of $\cV^*$ with $L_{\pi}^2(Q,V^*)$ (see \cite{GAS}). Then, for $\pi$ a probability measure with corresponding distribution function $F$, we define the $\pi$-averaged sesquilinear form $\ca(\cdot,\cdot):\cV\times\cV\ra\bC$ by
\begin{align}
    \ca(v,w)=\int_Q a(q;v(q),w(q))dF(q),\label{eq3.7}
\end{align}
where $v,w\in\cV$. It is now straightforward to show that $\ca(\cdot,\cdot)$ is also bounded and coercive (see \cite{GAS}). Therefore, in exactly the same way as before, $\ca(\cdot,\cdot)$ defines a bounded linear map $\cA:\cV\ra\cV^*$ which is also the infinitesimal generator of an analytic semigroup, $\left\{\cT(t)=e^{\cA t}:t\geq0\right\}$, of bounded linear operators on $\cV$, $\cH$, or $\cV^*$ depending on its domain. Then, defining the linear operators $\mathcal{B}:\mathbb{R}\rightarrow\mathcal{V}^*$ and $\mathcal{C}:\mathcal{V}\rightarrow\mathbb{R}$ by
\begin{align}
    <\mathcal{B}u,\psi>_{\mathcal{V}^*,\mathcal{V}}&=\int_Q\left<b(q),\psi(q)\right>_{V^*,V}dF(q)u=\mathbb{E}[\left<b(\mathcal{q}),\psi(\mathcal{q})\right>_{V^*,V}|\pi]u,\label{eq3.13}
\end{align}
\begin{align}
    \mathcal{C}\psi=\int_Q\left<c(q),\psi\right>_{V^*,V}dF(q)=\mathbb{E}[\left<c(\mathcal{q}),\psi(\mathcal{q})\right>_{V^*,V}|\pi],\label{eq3.14}
\end{align}
for $u\in\mathbb{R}$ and $\psi\in\mathcal{V}$ (see \cite{JMAA} and \cite{AUTO}), we can now rewrite the system \eqref{eq3.1}-\eqref{eq3.3} as
\begin{align}
    \dot{\cx}(t)&=\cA\cx(t)+\cB u(t), \ \ t>0,\label{eq3.8}\\
    \cx(0)&=\cx_0\in\cH,\label{eq3.9}\\
    \cy(t)&=\cC\cx(t), \ \ t>0.\label{eq3.10}
\end{align}
Then, the mild solution of \eqref{eq3.8}-\eqref{eq3.9} is given by
\begin{align}
    \cx(t)=\cT(t)\cx_0+\int_0^t \cT(t-s)\cB u(s)ds, \ \ t\geq0,\label{eq3.11}
\end{align}
and moreover, from \eqref{eq3.10}, we obtain that
\begin{align}
    \cy(t)=\cC\cT(t)\cx_0+\int_0^t\cC\cT(t-s)\cB u(s)ds, \ \ t\geq0.\label{eq3.12}
\end{align}
At this point, it is important to note (see \cite{GAS} and \cite{SG}) that the solution of system \eqref{eq3.1}-\eqref{eq3.2} is $\pi$-almost everywhere equivalent to the solution of the system \eqref{eq3.8}-\eqref{eq3.9} which is given by \eqref{eq3.11}. Consequently in place of the input output system  \eqref{eq3.1}-\eqref{eq3.3} with random parameters $q$, we consider the system \eqref{eq3.8}-\eqref{eq3.10} with solution given by \eqref{eq3.12}. Since the solution $\cx(\cdot)$ to \eqref{eq3.8}-\eqref{eq3.9} given in \eqref{eq3.11} does not explicitly involve any randomness, we can use the tools of linear semigroup theory to state and prove our approximation and convergence results for both the distribution estimation problem and the deconvolution problem (i.e the problem of estimating the input $u$ based on observations of the output $y$). We refer to the system \eqref{eq3.8}-\eqref{eq3.10}, or equivalently, \eqref{eq3.12}, as the population model.

\section{Estimation of the Distribution of Random Parameters}\label{est.par.dist}

\indent

The estimated distribution of the random parameters, $\cq$, and the corresponding finite dimensional approximation and convergence theory related to it, play a significant role in the deconvolution or input estimation problem which is the central focus of our treatment here.  Consequently, we provide a brief summary of our earlier results in this regard which were reported on in \cite{JMAA} and \cite{AUTO}. We begin by defining the discrete-time version of the system \eqref{eq3.8}-\eqref{eq3.10} and then use it to state the distribution estimation problem.

Let $\tau > 0$ denote a sampling time. We consider zero order hold inputs of the form $u(t)=u_j$, $ t \in [j\tau,(j+1)\tau)$, $j=0,1,...$ . Then, it follows from \eqref{eq3.8}-\eqref{eq3.10} that
\begin{align}
    \cx_{j+1}&=\hat{\cA}\cx_j+\hat{\cB}u_j, \ \ j=0,1,2,...,\label{eq4.1}\\
    \cy_j&=\hat{\cC}\cx_j, \ \ j=0,1,2,...,\label{eq4.2}
\end{align}
with $\cx_0\in\cV$ where $\hat{\cA}=\cT(t)\in\cL(\cV,\cV)$, $\hat{\cB}=\int_0^{\tau}\cT(s)\cB ds\in\cL(\bR,\cV)$, and $\hat{\cC}=\cC\in\cL(\cV,\bR)$. By assumption $\cx_0 = 0$. By the analyticity of the semigroup, $\left\{\cT(t):t\geq0\right\}$, the operators $\hat{\cA}$ and $\hat{\cB}$ are in fact bounded (see \cite{BK,BI,CS,LJL,PS,TNB}).  It follows that $\cx_j \in \cV$, $j=0,1,2, ...$, and moreover, if we make the continuous dependence assumption that  
\begin{align}
   |a(q_1;\psi_1,\psi_2)-a(q_2;\psi_1,\psi_2)| \leq d_Q(q_1,q_2)||\psi_1||||\psi_2||, \ \ \psi_1,\psi_2\in V, q_1,q_2 \in Q,\nonumber
\end{align}
then it is not difficult to show (using the Trotter-Kato theorem from linear semigroup theory, for example, see also,\cite{Kato,TNB}) that $\cx_j \in C(Q;V)$, $j=0,1,2, ...$.  In addition, the coercivity assumption implies that, without loss of generality, we may assume that $\cA:Dom(\cA)\subseteq\cV^*\ra\cV^*$ is invertible with bounded inverse, it follows that $\hat{\cB}=(\hat{\cA}-I)\cA^{-1}\cB\in\cL(\bR,\cV)$. Finally, by the definition of $\hat{\cA}$, we can write
\begin{align}
    \cy_j=\hat{\cC}\cx_j=\bE\left[\cy_j(\cq)|\pi\right]=\int_Q\hat{C}(q)x_j(q)dF(q),\label{eq4.3}
\end{align}
where $\hat{C}$ is the corresponding operator when we write the discrete-time version of the system \eqref{eq3.1}-\eqref{eq3.3} (see \cite{AUTO}). 

Within the framework we have constructed so far, we assume (1) that $\nu$ data sets $(\tilde{u}_i,\tilde{y}_i)_{i=1}^{\nu}=\left(\{\tilde{u}_{i,j}\}_{j=0}^{\mu_i-1},\{\tilde{y}_{i,j}\}_{j=1}^{\mu_i}\right)_{i=1}^{\nu}$ have been collected, and (2) the statistical model given by 
\begin{align}
\tilde{y}_{i,j}=\mathbb{E}[y_{i,j}|\pi_0]+\varepsilon_{i,j}, \ \ j=1,...,\mu_i, \ i=1,...,\nu,\label{eq4.3a}
\end{align}
where in \eqref{eq4.3a}, $\pi_0$ is some unknown probability measure to be estimated, and $\varepsilon_{i,j}, \ j=0,...,\mu_i, \ i=1,...,\nu$, represent measurement noise and are assumed to be independent and identically distributed with mean 0 and common variance $\sigma^2$.  

We let $\vec{a}=[a_i]_{i=1}^p$, $\vec{b}=[b_i]_{i=1}^p$, and $\vec{\theta}\in\Theta$ where $\Theta$ is a parameter set which is a compact subset of $\bR^r$ for some $r$.  Then, in attempting to estimate the distribution $\pi_0$ ($F_0$), we assume that $\cq$ has support of the form $Q=\prod_{i=1}^p[a_i,b_i]$, and that its distribution can be described by a probability measure of the form $\pi(\vec{a},\vec{b},\vec{\theta})$ and corresponding distribution function $q \mapsto F(q;\vec{a},\vec{b},\vec{\theta})$ having the joint probability density function $q \mapsto f(q;\vec{a},\vec{b},\vec{\theta})$. 

Letting $\mathcal{Q}$ be the subset of $\bR^p \times \bR^p$ given by  $\mathcal{Q}= \{(\vec{a}=[a_i]_{i=1}^p,\vec{b}=[b_i]_{i=1}^p): -\infty<\bar{\alpha}<a_i<b_i<\bar{\beta}<\infty\}$, we then state the estimation problem as follows:  We seek
\begin{align}
    (\vec{a}^*,\vec{b}^*,\vec{\theta}^*)=\argmin_{\mathcal{Q}\times\Theta}J(\vec{a},\vec{b},\vec{\theta})=\sum_{i=1}^{\nu}\sum_{j=1}^{\mu_i}\left|\cy_j(\tilde{u}_i,\vec{a},\vec{b},\vec{\theta})-\tilde{y}_{i,j}\right|^2,\label{eq4.4}
\end{align}
where $\left\{\cy_j(\tilde{u}_i,\vec{a},\vec{b},\vec{\theta})\right\}_{j=1}^{\mu_i}$ is given as in \eqref{eq4.3} with $u_j=\tilde{u}_{i,j}$, $j=0,1,...,\mu_i-1$ and $i=1,2,...,\nu$.

Let $\bar{Q}=\prod_{i=1}^p[\bar{\alpha},\bar{\beta}]$.  Then if we require that the maps $(\vec{a},\vec{b},\vec{\theta})\mapsto f(q;\vec{a},\vec{b},\vec{\theta})$ be continuous on $\bR^P\times\bR^P\times\Theta$ for $\pi$-a.e. $q\in\bar{Q}$ and there exist constants $0<\gamma,\delta<\infty$ such that $\gamma<f(q;\vec{a},\vec{b},\vec{\theta})<\delta$, for $\pi$-a.e. $q\in\bar{Q}$, then we can show that the map $(\vec{a},\vec{b},\vec{\theta})\mapsto J(\vec{a},\vec{b},\vec{\theta})$ is continuous. Moreover, using compactness, we can conclude that the estimation problem stated above has a solution, $(\vec{a}^*,\vec{b}^*,\vec{\theta}^*)$. Unfortunately, however, solving this problem in practice requires finite dimensional approximation. In \cite{AUTO} and \cite{JMAA} we have shown how to construct a sequence of approximating optimization problems, each of which has a solution, and for which a subsequence of these solutions converges to a solution to the optimization problem stated in \eqref{eq4.4}, as the level of discretization tends to infinity.  It is this same finite dimensional approximation and convergence framework that is at the heart of our schemes for the deconvolution problem to be discussed in the next section; hence we describe it here. 

We begin by defining the requisite finite dimensional subspaces and corresponding operators defined on these spaces. Let $\bar{\cH}=L_{\pi}^2(\bar{Q};H)$ and $\bar{\cV}=L_{\pi}^2(\bar{Q};V)$ for each $N=1,2,...$. Let $\vec{a}^N=[a_i^N]\in\bR^p$ and $\vec{b}^N=[b_i^N]\in\bR^p$ be such that $-\infty<\bar{\alpha}<a_i^N<b_i^N<\bar{\beta}<\infty$, and $\vec{\theta}^N\in\Theta$, for each $N=1,2,...$ and set $Q^N=\prod_{i=1}^p[a_i^N,b_i^N]$. Then define $\cH^N=L_{\pi}^2(Q^N;H)$, $\cV^N=L_{\pi}^2(Q^N;V)$, and $\cU^N$ to be a finite dimensional subspace of $\cV^N$ for each $N=1,2,...$. Now, define the operator $\cI^N:\bar{\cH}\ra\cH^N$ to be such that $Im(\cI^N)=\cH^N$ and $|\cI^Nx|_{\cH^N}\leq|x|_{\bar{\cH}}$. Then, let $\cP^N:\cH^N\ra\cU^N$ be the orthogonal projection of $\cH^N$ onto $\cU^N$ and define $\cJ^N=\cP^N\circ\cI^N$. Then, define $\cA^N:\cU^N\ra\cU^N$ by
\begin{align}
    \begin{aligned}
        \left<\cA^Nv^N,w^N\right>&=-\ca(v^N,w^N)\\
        &=-\int_{Q^N} a(q;v^N(q),w^N(q))dF^N(q)\\
        &=-\int_{Q^N} a(q;v^N(q),w^N(q))f(q;\vec{a}^N,\vec{b}^N,\vec{\theta}^N)dq,\label{eq4.5}
    \end{aligned}
\end{align}
where $v^N,w^N\in\cU^N$. Note here that it can be shown that $|e^{\cA^Nt}|_{\cH^N}\leq Me^{\omega_0t}$, $t\geq0$ for some constants $M>0$ and $\omega_0$ which are independent of $N$ (see \cite{SLR17,JMAA}). Then, define the operators $\cB^N:\bR\ra\cU^N$ and $\cC^N:\cU^N\ra\bR$ by
\begin{align}
    \left<\cB^Nu,v^N\right>=\int_{Q^N}\left<B(q)u,v^N(q)\right>_{V^*,V}f(q;\vec{a}^N,\vec{b}^N,\vec{\theta}^N)dq,\label{eq4.6}
\end{align}
and
\begin{align}
    \cC^Nv^N=\int_{Q^N}C(q)v^N(q)f(q;\vec{a}^N,\vec{b}^N,\vec{\theta}^N)dq,\label{eq4.7}
\end{align}
where $v^N\in\cU^N$ and $u\in\bR$. Then, assuming that $\nu$ data sets $(\tilde{u}_i,\tilde{y}_i)_{i=1}^{\nu}$ are given, the finite dimensional approximating problems are stated as
\begin{align}
    (\vec{a}^{N*},\vec{b}^{N*},\vec{\theta}^{N*})=\argmin_{\mathcal{Q}\times\Theta}J^N(\vec{a},\vec{b},\vec{\theta})=\sum_{i=1}^{\nu}\sum_{j=1}^{\mu_i}\left|\cy_j^N(\tilde{u}_i,\vec{a},\vec{b},\vec{\theta})-\tilde{y}_{i,j}\right|^2,\label{eq4.8}
\end{align}
where $\{\cy_j^N(\tilde{u}_i,\vec{a},\vec{b},\vec{\theta})\}_{j=1}^{\mu_i}$ is given by
\begin{align}
    \cx_{i,j+1}^N&=\hat{\cA}^N\cx_{i,j}^N+\hat{\cB}^N\tilde{u}_{i,j}, \ \ j=0,1,...,\mu_i-1,\label{eq4.9}\\
    \cy_{i,j}^N&=\hat{\cC}^N\cx_{i,j}, \ \ j=1,2,...,\mu_i,\label{eq4.10}
\end{align}
for each $i = 1,2,...,\nu$, with $\cx_{i,0}^N=0\in\cU^N$, $\hat{\cA}^N=e^{\cA^N\tau}\in\cL(\cU^N,\cU^N)$, $\hat{\cB}^N=\int_0^{\tau}e^{\cA^Ns}\cB^Nds\in\cL(\bR,\cU^N)$, and $\hat{\cC}^N=\cC^N\in\cL(\cU^N,\bR)$. In addition, we require the following assumption. For any $v^N\in\cV^N$, we have
\begin{align}
    \left|\cP^Nv^N-v^N\right|_{\cV^N}\ra0 \ \ \text{as} \ \ N\ra\infty.\label{eq4.11}
\end{align}
Then, under these assumptions, we are able to prove that
\begin{align}
    \left|\cJ^NR_{\lambda}(\cA)x-R_{\lambda}(\cA^N)\cJ^Nx\right|_{\cH^N}\ra0 \ \ \text{as} \ \ N\ra\infty,\label{eq4.12}
\end{align}
for some $\lambda\geq\omega_0$ and every $x\in\bar{\cH}$ where $R_{\lambda}(\cA)$ and $R_{\lambda}(\cA^N)$ denote the resolvent operators of $\cA$ and $\cA^N$ at $\lambda$ (\cite{JMAA}). Using this resolvent convergence result and a version of the Trotter-Kato theorem that allows for the state spaces to depend on the parameters (\cite{BBC,SLR17,JMAA}), we obtain
\begin{align}
    \left|\cJ^N\cT(t)x-e^{\cA^Nt}\cJ^Nx\right|_{\cH^N}\ra0 \ \ \text{as} \ \ N\ra\infty,\label{eq4.13}
\end{align}
for every $x\in\bar{\cH}$ uniformly on compact t-intervals of $[0,\infty)$. Then, these results lead us to show that there exists a subsequence $\left\{(\vec{a}^{N_j*},\vec{b}^{N_j*},\vec{\theta}^{N_j*})\right\}_{j=1}^{\infty}$ that converges to $(\vec{a}^*,\vec{b}^*,\vec{\theta}^*)$ as $j\ra\infty$ where $(\vec{a}^*,\vec{b}^*,\vec{\theta}^*)$ is a solution of the estimation problem \eqref{eq4.4}.

In implementing this scheme, we used linear splines for the $\eta$ discretization and characteristic functions for the $q_1$ and $q_2$ discretizations. Matrix representations of the operators $\hat{\cA}^N$, $\hat{\cB}^N$, and $\hat{\cC}^N$ could then be obtained by using the Galerkin formulation of the operators appearing in the system \eqref{eq4.9}-\eqref{eq4.10} (see Section 6 below for further details). In solving the finite dimensional optimization problems \eqref{eq4.8}, the requisite gradients of the cost functional $J^N$ were computed via the adjoint method (see \cite{Levi2010}). For each $i=1,...,\nu$, set $v_{i,j}^N=[2(\hat{\mathcal{C}}^N\mathcal{x}_{i,j}^N-\tilde{y}_{i,j}),0,...,0]^T\in\mathbb{R}^{K^N}$, $j=0,...,\mu_i$, where $K^N$ is the dimension of $\mathcal{U}^N$ (we note that $\tilde{y}_{i,0}=0$ since we have assumed that $\cx_{i,0} = 0$). The adjoint system is
\begin{align}
    z_{i,j-1}^N=[\hat{\mathcal{A}}^N]^Tz_{i,j}^N+v_{i,j-1}^N.\label{eq4.14}
\end{align}
The gradient of $J^N$ at $\rho = (\vec{a},\vec{b},\vec{\theta})$ is now given by
%
\begin{align}
    \begin{aligned}
        \vec{\nabla}J^N(\rho)&=\sum_{i=1}^\nu\sum_{j=1}^{\mu_i}[z_{i,j}^N]^T\biggl(\frac{\partial\hat{\mathcal{A}}^N}{\partial\rho}\mathcal{x}_{i,j-1}^N\\
        &\quad-(\mathcal{A}^N)^{-1}\biggl(\frac{\partial\mathcal{A}^N}{\partial\rho}(\mathcal{A}^N)^{-1}(\hat{\mathcal{A}}^N-I)\frac{\partial\hat{\mathcal{B}}^N}{\partial\rho}\tilde{u}_{i,j-1}\\
        &\quad-\frac{\partial\hat{\mathcal{A}}^N}{\partial\rho}\hat{\mathcal{B}}^N\tilde{u}_{i,j-1}-(\hat{\mathcal{A}}^N-I)\frac{\partial\hat{\mathcal{B}}^N}{\partial\rho}\tilde{u}_{i,j-1}\biggr)\biggr)\\
        &\quad+\sum_{i=1}^\nu\sum_{j=0}^{\mu_i}\bigl(\mathcal{y}_j^N-\tilde{y}_{i,j}\bigr)^T\frac{\partial\hat{\mathcal{C}}^N}{\partial\rho}\mathcal{x}_{i,j}^N.\label{eq4.15}
    \end{aligned}
\end{align}
Equations \eqref{eq4.14} and \eqref{eq4.15} require the tensor $\frac{\partial\hat{\mathcal{A}}^N}{\partial\rho}$. For $t\geq0$, with $\Phi^N(t)=e^{\mathcal{A}^N(t)}$, it follows that  
\begin{align}
    \begin{aligned}
        \dot{\Phi}^N(t)=\mathcal{A}^N\Phi^N(t), \ \ \Phi^N(0)=I.\label{eq4.16}
    \end{aligned}
\end{align}
Then, if $\Psi^N(t)=\frac{\partial\Phi^N(t)}{\partial\rho}$, the sensitivity equations can be obtained by differentiating \eqref{eq4.16} with respect to $\rho$, and interchanging the order of differentiation,
\begin{align}
    \begin{aligned}
        \dot{\Psi}^N(t)=\mathcal{A}^N\Psi^N(t)+\frac{\partial\mathcal{A}^N}{\partial\rho}\Phi^N(t), \ \ \Psi^N(0)=0.\label{eq4.17}
    \end{aligned}
\end{align}
Equations \eqref{eq4.16} and \eqref{eq4.17} then yield
\begin{align}
\begin{bmatrix}
\Psi^N(t)\\
\Phi^N(t)
\end{bmatrix}
=exp\biggl(
\begin{bmatrix}
\mathcal{A}^N & \partial\mathcal{A}^N/\partial\rho\\
0 & \mathcal{A}^N
\end{bmatrix}
\tau\biggr)
\begin{bmatrix}
0\\
I
\end{bmatrix}\label{eq4.18}
\end{align}
It follows from \eqref{eq4.18} that $\frac{\partial\hat{\mathcal{A}}^N}{\partial\rho}=\Psi^N(\tau)$. The details can be found in \cite{AUTO,JMAA}.

\section{Input Estimation or Deconvolution}\label{decon}

\indent

We formulate the problem of estimating or deconvolving the input to the population model, \eqref{eq3.8}-\eqref{eq3.10} (or in discrete time, \eqref{eq4.1}-\eqref{eq4.2}) as a constrained, regularized optimization problem that ultimately takes the form of a quadratic programming problem, or a linear quadratic tracking problem (see, for example, \cite{LJL}), albeit with a performance index whose evaluation requires the solution of an infinite dimensional state equation. In practice of course, as was the case in the distribution estimation problem discussed in the previous section, we solve a finite dimensional approximation. In this section, we prove the convergence of solutions of finite dimensional approximating problems to the solution of the original infinite dimensional estimation problem. In the next section we discuss how the approximating problems are regularized and how optimal values of the regularization parameters or weights can be determined.  In that section we also discuss how credible bands for the estimated or deconvolved input signal can be obtained based on the distribution of the random parameters in the population model.

We denote the parameters that determine the distribution of $\cq$ by $\rho$, where $\rho = (\vec{a},\vec{b},\vec{\theta})$, optimally fit parameters by $\rho^*$, where $\rho^* = (\vec{a}^*,\vec{b}^*,\vec{\theta}^*)$, and the support of the random parameters determined by the optimally fit parameters, $\rho^*$, by $Q^*$ (see Section 4 above).  We then consider the population model \eqref{eq4.1}-\eqref{eq4.2} in which the input $\{u_j\}$ is obtained by zero-order hold sampling a continuous time signal.  That is, we assume that the input to the population model is given by $\{\cu_j\}$ with $\cu_j = \cu(j\tau) \in L_{\pi(\rho^*)}^2(Q^*)$, where $\tau > 0$ is the length of the sampling interval and $\cu$ is a continuous time signal that is at least continuous (to allow for sampling) on an interval $[0,T]$. In our framework, we first use this model with the training data to estimate the distribution of the random parameters (i.e. to obtain $\rho^*$). Then, we seek an estimate for the input based on this model and the estimated distribution of the random parameters. We consider our input estimate, $\cu$, to be a random variable being a function of the random parameters, $\cq$, as well as a function of time. Therefore, let $\cu \in S(0,T)$, where $S(0,T)= H^1(0,T;L_{\pi(\rho^*)}^2(Q^*))$ and let $\cU$ be a compact subset of $S(0,T)$. 

The input estimation problem is then given by
\begin{align}
    \cu^*=\argmin_{\cU}J(\cu)=\argmin_{\cU}\sum_{k=1}^K\left|\cy_k(\cu)-\hat{y}_k\right|^2+||\cu||_{S(0,T)}^2,\label{eq5.1}
\end{align}
where the term $||\cu||_{S(0,T)}^2$ is related to the regularization with $||\cdot||_{S(0,T)}$ a norm on the space $S(0,T)$, the  details of which will be discussed later, and
\begin{align}
    \cy_k(\cu)=\sum_{j=0}^{k-1}\left<h_{k-j}(\rho^*),\cu_j\right>_{L_{\pi(\rho^*)}^2(Q^*)}, \ \ k=1,2,...,K,\label{eq5.2}
\end{align}
with $\cu_j=\cu(j\tau;q)$, $j=0,1,...,K$, $h_l(\rho^*)=\hat{\cC}(\rho^*)\hat{\cA}(\rho^*)^{l-1}\hat{\cB}(\rho^*)\in L_{\pi(\rho^*)}^2(Q^*)^*=L_{\pi(\rho^*)}^2(Q^*)$, $l=1,2,...,K$ where $\hat{\cC}(\rho^*)=\cC(\rho^*)\in\cL(\cV,\bR)$, $\hat{\cA}(\rho^*)=e^{\cA(\rho^*)\tau}\in\cL(\cV^*,\cV)$, and $\hat{\cB}(\rho^*)=\int_0^{\tau}e^{\cA(\rho^*)s}\cB(\rho^*)ds=\cA(\rho^*)^{-1}(\hat{\cA}(\rho^*)-I)\cB(\rho^*)\in\cL(L_{\pi(\rho^*)}^2(Q^*),\cV)$. We note that the coercivity assumption implies without loss of generality (via a standard change of variable), that we may assume that the operator $\cA(\rho^*)$ has a bounded inverse, $\cA(\rho^*)^{-1} \in\cL(\cV^*,\cV)$. It is also important to indicate here that, due to the assumption that $\cu$ is also a function of $\cq$, we consider the operator $\cB(\rho^*)$ now to be defined from $L_{\pi(\rho^*)}^2(Q^*)$ into $\cV^*$ as
\begin{align}
    \left<\cB(\rho^*)\cu,\psi\right>=\int_{Q^*}\left<B(q)\cu(q),\psi(q)\right>_{V^*,V}f(q;\rho^*)dq.\label{eq5.2a}
\end{align}

Solving the problem stated in \eqref{eq5.1} requires finite dimensional approximation. Let $N,M,L$ be multi-indices defined by $N=(n,m_1,m_2)$, $M=(m,m_1,m_2)$, and $L=(N,M)$. Note that whenever we use the notation $N$, $M$, or $L\ra\infty$, we mean all components of these multi-indices go to infinity. For each $M$, let $\cU^M$ be a closed subset of $\cU$ which is contained in a finite dimensional subspace of $S(0,T)$.  The sets  $\cU^M$ are, of course, therefore compact. 

We will require the following approximation assumption on the subsets $\cU^M$, or more typically, the finite dimensional spaces they are contained in, $\cS^M\subset S(0,T)$. \\

\noindent
\textbf{Assumption 5.1} For each $\cu\in\cU$, there exists a sequence $\{\cu^M\}$ with $\cu^M\in\cU^M$, such that
\begin{align}
    ||\cu^M-\cu||_{S(0,T)}\ra0, \ \ \text{as} \ \ M\ra\infty.\label{eq5.3} 
\end{align}
\\

\noindent
Let the Bochner spaces $\cV=L_{\pi(\rho^*)}^2(Q^*;V)$ and $\cH=L_{\pi(\rho^*)}^2(Q^*;H)$ which in the usual manner, form the Gelfand triple $\cV\hra\cH\hra\cV^*$, be as they were defined in Section 3. For each $N$ let $\cV^N$ denote a finite dimensional subspace of $\cV$ and let $\cP^N_\cH:\cH \ra \cV^N$ denote the orthogonal projection of $\cH$ onto $\cV^N$.  We require the assumption that the subspaces $\cV^N$ have the following approximation property. \\

\noindent
\textbf{Assumption 5.2} For each $\mathcal{v}\in\cV$
\begin{align}
    ||\cP^N_\cH\mathcal{v}-\mathcal{v}||_{\mathcal{V}}\ra0, \ \ \text{as} \ \ N\ra\infty.\label{eq5.3a} 
\end{align}
\\

\noindent
We note that it can often be shown using the Schmidt inequality (see \cite{Schultz1973}) that spline-based subspaces $\cV^N$ typically satisfy \eqref{eq5.3a}.

Define the operators $\cA^N(\rho^*):\cV^N\ra\cV^N$ by
\begin{align}
    \begin{aligned}
        \left<\cA^N(\rho^*)\cv^N,\cw^N\right>&=-\ca(\cv^N,\cw^N)\\
        &=-\int_{Q^*} a(q;\cv^N(q),\cw^N(q))dF^N(q)\\
        &=-\int_{Q^*} a(q;\cv^N(q),\cw^N(q))f(q;\vec{a}^*,\vec{b}^*,\vec{\theta}^*)dq,\label{eq5.3b}
    \end{aligned}
\end{align}
where $\cv^N,\cw^N\in\cV^N$.  We note that as is the case with the operator $\cA(\rho^*)$, coercivity implies that without loss of generality, the operators $\cA^N(\rho^*)$ given in \eqref{eq5.3b} are invertible with inverses that are uniformly bounded in $N$.  Moreover, once again, as a result of coercivity, standard estimates can be used to show that 
\begin{align}
    \begin{aligned}
        ||\cA^N(\rho^*)^{-1}\cP^N\varphi-\cP^N\cA(\rho^*)^{-1}\varphi||_\cV \ra 0, \varphi \in \cH\label{eq5.3c}
    \end{aligned}
\end{align}
as $N \ra \infty$ (see, for example, \cite{BK, BI, JMAA}). It then follows from the Trotter-Kato semigroup approximation theorem \cite{BK, BI, pazy83} that   
\begin{align}
    \begin{aligned}
        ||\hat{\cA}^N(\rho^*)\cP^N\varphi-\cP^N\hat{\cA}(\rho^*)\varphi||_\cV \ra 0, \varphi \in \cH\label{eq5.3d}
    \end{aligned}
\end{align}
as $N \ra \infty$.

We now state the finite dimensional approximating deconvolution problems as follows:
\begin{align}
    \cu_L^*=\argmin_{\cU^M}J^L(\cu)=\argmin_{\cU^M}\sum_{k=1}^K\left|\cy_k^N(\cu)-\hat{y}_k\right|^2+||\cu||_{S(0,T)}^2,\label{eq5.4}
\end{align}
where
\begin{align}
    \cy_k^N(\cu)=\sum_{j=0}^{k-1}\left<h_{k-j}^N(\rho^*),\cu_j\right>_{L_{\pi(\rho^*)}^2(Q^*)}, \ \ k=1,2,...,K,\label{eq5.5}
\end{align}
with $\cu_j=\cu(j\tau) \in L_{\pi(\rho^*)}^2(Q^*)$, $j=0,1,...,K$, $h_l^N(\rho^*)=\hat{\cC}^N(\rho^*)\hat{\cA}^N(\rho^*)^{l-1}\hat{\cB}^N(\rho^*)\in L_{\pi(\rho^*)}^2(Q^*)^*=L_{\pi(\rho^*)}^2(Q^*)$, $l=1,2,...,K$, $\hat{\cC}^N(\rho^*)=\cC(\rho^*)$, $\hat{\cA}^N(\rho^*)=e^{\cA^N(\rho^*)\tau}\in\cL(\cV^N,\cV^N)$, $\hat{\cB}^N(\rho^*)=\int_0^{\tau}e^{\cA^N(\rho^*)s}\cB^N(\rho^*)=\cA^N(\rho^*)^{-1}(\hat{\cA}^N(\rho^*)-I)\cB^N(\rho^*)$ with $\cB^N(\rho^*)=\bar{\cP}_{\cH}^N\cB(\rho^*) \in \cL(L_{\pi(\rho^*)}^2(Q^*),\cV^N)$, the operator $\cB(\rho^*)$ given by \eqref{eq5.2a}, and $\bar{\cP}_{\cH}^N$ being the extension of the orthogonal projection of $\cH$ onto $\cV^N$ to $\cV^*$. Note that the existence of the extension of the orthogonal projection operator can be proved by using the fact that $\cH$ is a dense subset of $\cV^*$ and the Riesz Representation theorem.

Now, consider the following lemma which we will need to prove our convergence theorem.

\begin{lemma}
For $l=1,2,...,K$, let the bounded linear functionals $h_l(\rho^*)$ and $h_l^N(\rho^*)$ on $L_{\pi(\rho^*)}^2(Q^*)$ be given by  
$h_l(\rho^*)=\cC(\rho^*)\hat{\cA}(\rho^*)^{l-1}\cA(\rho^*)^{-1}(\hat{\cA}(\rho^*)-I)\cB(\rho^*)$ and\\ $h_l^N(\rho^*)=\cC(\rho^*)\hat{\cA}^N(\rho^*)^{l-1}\cA^N(\rho^*)^{-1}(\hat{\cA}^N(\rho^*)-I)\cB^N(\rho^*)= \cC(\rho^*)\hat{\cA}^N(\rho^*)^{l-1}\cA^N(\rho^*)^{-1}$ $(\hat{\cA}^N(\rho^*)-I)\bar{\cP}_{\cH}^N\cB(\rho^*)$, respectively. Then, under Assumption 5.2, we have that $h_l^N(\rho^*)$ converges weakly (i.e. pointwise) to $h_l(\rho^*)$ in $L_{\pi(\rho^*)}^2(Q^*)^*$ (and therefore, their respective Riesz representers in $L_{\pi(\rho^*)}^2(Q^*)$ converge weakly in $L_{\pi(\rho^*)}^2(Q^*)$ as well) as $N \ra \infty$, uniformly in $l$ on any finite set of indices.  That is
\begin{align}
    |\left<h_l^N(\rho^*),\cw\right>_{L_{\pi(\rho^*)}^2(Q^*)}-\left<h_l(\rho^*),\cw\right>_{L_{\pi(\rho^*)}^2(Q^*)}| \ra 0,\label{eq5.6}
\end{align}
as $N\ra\infty$, for all $\cw \in L_{\pi(\rho^*)}^2(Q^*)$, $l=1,2,...,K$, uniformly in $l$ on any finite set of indices.  Moreover, if $\{\cu^M\}$ is a sequence in $L_{\pi(\rho^*)}^2(Q^*)$ with $\cu^M \ra \cu \in L_{\pi(\rho^*)}^2(Q^*)$, we have that $\left<h_l^N(\rho^*),\cu^M\right> \ra \left<h_l(\rho^*),\cu\right>$ as $L \ra \infty$, uniformly in $l$ on any finite set of indices.
\end{lemma}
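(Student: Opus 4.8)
The plan is to establish the weak (pointwise) convergence $h_l^N(\rho^*) \ra h_l(\rho^*)$ by decomposing the operator product $\hat{\cA}^N(\rho^*)^{l-1}\cA^N(\rho^*)^{-1}(\hat{\cA}^N(\rho^*)-I)\bar{\cP}_{\cH}^N\cB(\rho^*)$ and tracking the difference from the limiting product term by term, using the already-established convergence results \eqref{eq5.3c} and \eqref{eq5.3d} together with uniform boundedness. First I would fix $\cw \in L_{\pi(\rho^*)}^2(Q^*)$ and write $\xi = \cB(\rho^*)\cw \in \cV^*$, so that $h_l(\rho^*)\cw = \cC(\rho^*)\hat{\cA}(\rho^*)^{l-1}\cA(\rho^*)^{-1}(\hat{\cA}(\rho^*)-I)\xi$ while $h_l^N(\rho^*)\cw = \cC(\rho^*)\hat{\cA}^N(\rho^*)^{l-1}\cA^N(\rho^*)^{-1}(\hat{\cA}^N(\rho^*)-I)\bar{\cP}_{\cH}^N\xi$. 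Since $\cH$ is dense in $\cV^*$, I would first approximate $\xi$ by an element $\varphi \in \cH$ (controlling the error using the uniform boundedness of all the operators involved, which follows from analyticity of the semigroups, coercivity-based invertibility of $\cA^N(\rho^*)$ uniformly in $N$, and the contraction-type bound on $\bar{\cP}_{\cH}^N$), and then prove the convergence for $\varphi \in \cH$, where \eqref{eq5.3c} and \eqref{eq5.3d} directly apply.

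The core of the argument is a standard telescoping estimate: writing $P_k^N := \cA^N(\rho^*)^{-1}(\hat{\cA}^N(\rho^*)-I)$ and its limit $P_k$, and observing that $\hat{\cA}(\rho^*) = I + \cA(\rho^*)P_1$ etc., I would reduce $h_l^N(\rho^*)\varphi - h_l(\rho^*)\varphi$ to a sum of $l$ terms, each of which is $\cC(\rho^*)$ applied to a product in which exactly one factor is a difference $\hat{\cA}^N(\rho^*)\cP^N - \cP^N\hat{\cA}(\rho^*)$ or $\cA^N(\rho^*)^{-1}\cP^N - \cP^N\cA(\rho^*)^{-1}$ acting on a fixed $\cH$-element (the remaining factors on the left are powers of $\hat{\cA}^N(\rho^*)$, uniformly bounded by $Me^{\omega_0\tau}$ raised to a power at most $l-1 \le K-1$, and the factors on the right are fixed operators applied to $\varphi$, landing in $\cH$ because the range of $\hat{\cB}(\rho^*)$ lies in $\cV \subset \cH$). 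Here one must use that $\cP^N \ra I$ strongly on $\cV$ (Assumption 5.2 via \eqref{eq5.3a}) to handle the intermediate projections, and that $\cC(\rho^*)$ is bounded from $\cV$ (hence from $\cH$ up to the embedding) into $\bR$. Summing the $l$ terms and noting $l \le K$ gives a bound uniform in $l$ over $\{1,\dots,K\}$; taking $N \ra \infty$ and then letting the $\cH$-approximation error go to zero yields \eqref{eq5.6}. Since the Riesz representers are obtained from the functionals via the Riesz map, which is isometric, weak convergence of the functionals in $L_{\pi(\rho^*)}^2(Q^*)^*$ is equivalent to weak convergence of the representers in $L_{\pi(\rho^*)}^2(Q^*)$.

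For the final sentence, given $\cu^M \ra \cu$ strongly in $L_{\pi(\rho^*)}^2(Q^*)$ as $M \ra \infty$, I would split
\begin{align}
    |\langle h_l^N(\rho^*),\cu^M\rangle - \langle h_l(\rho^*),\cu\rangle| \le |\langle h_l^N(\rho^*),\cu^M-\cu\rangle| + |\langle h_l^N(\rho^*)-h_l(\rho^*),\cu\rangle|,\nonumber
\end{align}
bound the first term by $\|h_l^N(\rho^*)\|\,\|\cu^M-\cu\|$ using the uniform (in $N$ and $l \le K$) bound on $\|h_l^N(\rho^*)\|$ established above, and bound the second term by \eqref{eq5.6}; since $L \ra \infty$ forces both $N \ra \infty$ and $M \ra \infty$, both terms vanish, uniformly in $l$ on finite index sets.

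The main obstacle I anticipate is the bookkeeping in the telescoping decomposition when the intermediate projections $\cP^N$ do not commute with the operators and are not the identity: one must insert and remove projections carefully so that each difference-factor acts on a genuinely fixed element of $\cH$ (not an $N$-dependent one), which is exactly what lets \eqref{eq5.3c}--\eqref{eq5.3d} be invoked; the role of the density of $\cH$ in $\cV^*$ (to replace $\cB(\rho^*)\cw$ by an $\cH$-element) and the uniform operator bounds are what make this reduction legitimate, and getting the order of limits ($\cH$-approximation before $N\ra\infty$) right is the delicate point.
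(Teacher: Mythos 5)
Your proposal is correct and follows essentially the same route as the paper: the paper's (much terser) proof likewise reduces \eqref{eq5.6} to Assumption 5.2, the factor-wise strong convergences \eqref{eq5.3c} and \eqref{eq5.3d}, and the strong convergence of $\bar{\cP}_{\cH}^N$ to the identity on $\cV^*$ obtained by density of $\cH$ in $\cV^*$, with the final claim deduced from the weak convergence together with the (uniform) boundedness of the $h_l^N(\rho^*)$. Your telescoping decomposition, the $\cH$-approximation of $\cB(\rho^*)\cw$, and the careful ordering of limits are exactly the details the paper leaves implicit.
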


\begin{proof}
The last claim in the statement of the Lemma of course follows immediately from the weak convergence in \eqref{eq5.6}.  To establish \eqref{eq5.6}, we note that in light of the approximation assumption, Assumption 5.2, on the finite dimensional subspaces $\cV^N$, \eqref{eq5.3a}, and the strong convergence in  \eqref{eq5.3c} and \eqref{eq5.3d}, the result will follow if we can show that the operators $\bar{\cP}_{\cH}^N$ converge strongly to the identity on the space $\cV^*$. But this follows easily from the approximation condition \eqref{eq5.3a} and standard density arguments.
\end{proof}

Now, we are ready to state and prove the the theorem regarding convergence of the solutions of approximating problems to the solution of the main problem.

\begin{theorem}
 For each $L$, the finite dimensional approximating optimization problem given in \eqref{eq5.4} admits a solution denoted by $\cu_L^*$. Moreover, under Assumptions 5.1 and 5.2 there exists a subsequence of $\{\cu_L^*\}$, $\{\cu_{L_k}^*\}\subset\{\cu_L^*\}$, with $\cu_{L_k}^*\ra\cu^*$ as $k\ra\infty$, with $\cu^*$ a solution to the infinite dimensional estimation problem given in \eqref{eq5.1}.
\end{theorem}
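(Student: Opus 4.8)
The plan is to run the direct method of the calculus of variations in its Trotter--Kato-approximation form: obtain existence at each discretization level from compactness, prove a $\limsup$ inequality along a recovery sequence furnished by Assumption 5.1, extract a convergent subsequence of the finite dimensional minimizers using compactness of $\cU$, prove full convergence of the cost along that subsequence via the weak convergence of the functionals $h_l^N(\rho^*)$ established in the Lemma, and chain the inequalities. For the existence part, the one structural observation needed is that the sampling map $\cu\mapsto\cu(j\tau)$ is continuous from $S(0,T)=H^1(0,T;L_{\pi(\rho^*)}^2(Q^*))$ into $L_{\pi(\rho^*)}^2(Q^*)$, by the one-dimensional Sobolev embedding $H^1(0,T;X)\hra C([0,T];X)$; then each $\cu\mapsto\cy_k^N(\cu)$ in \eqref{eq5.5} is a bounded linear functional and $\cu\mapsto J^L(\cu)$ is continuous on $S(0,T)$, so $J^L$ attains its minimum over the compact set $\cU^M$, which defines $\cu_L^*$.

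For the upper bound I would fix $\cu\in\cU$ and use Assumption 5.1 to choose $\cu^M\in\cU^M$ with $\|\cu^M-\cu\|_{S(0,T)}\to0$; by optimality $J^L(\cu_L^*)\le J^L(\cu^M)$. The Sobolev embedding gives $\cu^M(j\tau)\to\cu(j\tau)$ in $L_{\pi(\rho^*)}^2(Q^*)$ for each $j$, and applying the final assertion of the Lemma to these sampled values (strong convergence of the arguments together with $L\to\infty$ forces convergence of the pairings, uniformly in $l$ over $\{1,\dots,K\}$) yields $\cy_k^N(\cu^M)\to\cy_k(\cu)$ for $k=1,\dots,K$. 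Since also $\|\cu^M\|_{S(0,T)}^2\to\|\cu\|_{S(0,T)}^2$, we get $J^L(\cu^M)\to J(\cu)$, hence $\limsup_{L\to\infty}J^L(\cu_L^*)\le J(\cu)$ for every $\cu\in\cU$. Because $J$ is continuous on the compact set $\cU$ it has a minimizer there, so $\limsup_{L\to\infty}J^L(\cu_L^*)\le\min_{\cU}J$.

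For the extraction step, $\{\cu_L^*\}\subset\cU$ and $\cU$ is compact in $S(0,T)$, so there is a subsequence $\{\cu_{L_k}^*\}$ with $\cu_{L_k}^*\to\cu^*\in\cU$ in $S(0,T)$. By the Sobolev embedding $\cu_{L_k}^*(j\tau)\to\cu^*(j\tau)$ in $L_{\pi(\rho^*)}^2(Q^*)$ for each $j$, so a second application of the Lemma gives $\cy_k^{N_k}(\cu_{L_k}^*)\to\cy_k(\cu^*)$ for $k=1,\dots,K$, and $\|\cu_{L_k}^*\|_{S(0,T)}^2\to\|\cu^*\|_{S(0,T)}^2$; therefore $J^{L_k}(\cu_{L_k}^*)\to J(\cu^*)$. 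Chaining the estimates, $\min_{\cU}J\le J(\cu^*)=\lim_{k\to\infty}J^{L_k}(\cu_{L_k}^*)\le\limsup_{L\to\infty}J^L(\cu_L^*)\le\min_{\cU}J$, so equality holds throughout and $\cu^*$ minimizes $J$ over $\cU$, i.e. solves \eqref{eq5.1}.

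Given the Lemma, the argument is essentially assembly, and I expect the only real care to be in handling the two directions of the multi-index $L=(N,M)$ simultaneously: the data-misfit functionals are built from the $h_l^N(\rho^*)$, which converge only weakly (pointwise), while the admissible inputs are independently discretized through the $\cU^M$, so one cannot legitimately first send $N\to\infty$ and then $M\to\infty$. The resolution is to invoke the joint-convergence form of the Lemma (strong convergence of the input iterates together with $L\to\infty$) at both places it is needed --- once for the recovery sequence and once for the extracted convergent subsequence --- with the one-dimensional Sobolev embedding $H^1\hra C$ supplying the continuity of the sampling map $\cu\mapsto(\cu(j\tau))_j$ that links $S(0,T)$-convergence to convergence of the arguments actually entering the cost.
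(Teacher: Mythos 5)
Your proposal is correct and follows essentially the same route as the paper: existence from continuity of $J^L$ and compactness of $\cU^M$, a recovery sequence from Assumption 5.1 combined with the joint-convergence form of Lemma 5.1, extraction of a convergent subsequence from compactness of $\cU$, and chaining the resulting inequalities. Your version merely makes explicit two points the paper leaves implicit --- the $H^1(0,T;X)\hra C([0,T];X)$ embedding behind the continuity of the sampling map, and the $\limsup$ bookkeeping in the final chain --- which is a welcome but not substantively different refinement.
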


\begin{proof}
The existence of a solution to each of the finite dimensional approximating optimization problems given in \eqref{eq5.4} follows from \eqref{eq5.5} and therefore the continuity of $J^L$, and the compactness of $\cU^M$.  Now let $\{\cu_M\}$ be a convergent sequence in $\cU \subset S(0,T)$ with $\cu_M \in \cU^M$ and $||\cu_M-\cu||_{S(0,T)} \ra 0$ as $M \ra \infty$, $\cu \in \cU$. Then if $\cu_{M,j} = \cu_M(j\tau) \in L_{\pi(\rho^*)}^2(Q^*)$ and $\cu_j = \cu(j\tau) \in L_{\pi(\rho^*)}^2(Q^*)$, $j = 1,2, ..., K$, it follows that $||\cu_{M,j}-\cu_j||_{L_{\pi(\rho^*)}^2(Q^*))}\ra 0$ as $M \ra \infty$, $j = 1,2, ..., K$. It follows from the lemma that 
\begin{align}
    |\left<h_l^N(\rho^*),\cu_{M,j}\right>-\left<h_l(\rho^*),\cu_{j}\right>|\ra0, \ \ \text{as} \ \ L = (N,M) \ra\infty, \ \ j,l=1,2,...,K.\label{eq5.6a}
\end{align}
It follows from \eqref{eq5.6a} that $J^L(\cu_M) \ra J(\cu)$ as $L \ra \infty$.

Let $\{\cu_L^*\} \subset \cU_M \subset \cU$ be a sequence of solutions to the finite dimensional approximating optimization problems given in \eqref{eq5.4}.  The compactness of $\cU$ implies that there exists a subsequence of $\{\cu_L^*\}$, $\{\cu_{L_k}^*\}\subset\{\cu_L^*\}$, with $\cu_{L_k}^*\ra \cu^* \in \cU$ as $k \ra \infty$.  Then for any $\cu \in \cU$ it follows that
\begin{align}
    \begin{aligned}
        J(\cu^*)=J(\lim_{k \ra \infty}\cu_{L_k}^*)=\lim_{k \ra \infty} J^{L_k}(\cu_{L_k}^*)\leq \lim_{k \ra \infty} J^{L_k}(\cu^{M_k})=J(\lim_{k \ra \infty}\cu^{M_k})=J(\cu), \label{eq5.6b}
    \end{aligned}
\end{align}
where $L_k=(N_k,M_k)$, and the sequence $\{\cu^{M_k}\} \subset \cU$ with $\cu^{M_k} \in \cU^{M_k}$ are the approximations to $\cu$ guaranteed to exist by the approximation assumption on the subsets $\cU^M$ given in Assumption 5.1, equation \eqref{eq5.3}, and thus, $\cu^*$ is a solution of the estimation problem stated in \eqref{eq5.1}.
\end{proof}

We note first that the above theorem continues to hold if $S(0,T)$ is chosen as $S(0,T)= L_2(0,T;L_{\pi(\rho^*)}^2(Q^*))$.  In addition, it is not difficult to see that the way we have formulated the deconvolution problem and its finite dimensional approximations given in \eqref{eq5.1} and \eqref{eq5.4}, respectively, the resulting optimization problems take the form of a linear quadratic tracking problem where the systems to be controlled are, respectively, the population model, \eqref{eq4.1},\eqref{eq4.2} and its finite dimensional approximation, \eqref{eq4.9}, \eqref{eq4.10}.  Consequently, it becomes possible to adapt some ideas from \cite{BB} to obtain a somewhat different and in some ways stronger convergence result than that given in Theorem 5.1 above. 

Now let $\cU$ be a closed and convex subset of $S(0,T)$ and for each $M$, let $\cU^M$ be a closed convex subset of $\cU$ which is contained in a finite dimensional subspace of $S(0,T)$. Note that the maps $\cu \mapsto J(\cu)$ and $\cu \mapsto J^L(\cu)$ from $\cU$ into $\mathbb{R}$ and from $\cU^M$ into $\mathbb{R}$, respectively, are strictly convex.   Consequently solutions $\cu^*$ and $\cu_L^*$ to the optimization problems \eqref{eq5.1} and \eqref{eq5.4}, respectively, exist and are unique.  Moreover, the sequence $\{||\cu_L^*||_{S(0,T)}\}$ is bounded. If not, there would exist a subsequence, $\{\cu_{L_k}^*\}$ with $||\cu_{L_k}^*||_{S(0,T)} \rightarrow \infty$, and therefore that $J^{L_k}(\cu_{L_k}^*) \rightarrow \infty$.  But then this would contradict the fact that for any $u \in \cU$, we have that

\begin{align}
    \begin{aligned}
        J^{L_k}(\cu_{L_k}^*)\leq J^{L_k}(\cu^{M_k}) \rightarrow J(\cu) < \infty, \label{eq5.6c}
    \end{aligned}
\end{align}
where $L_k=(N_k,M_k)$, and the sequence $\{\cu^{M_k}\} \subset \cU$ with $\cu^{M_k} \in \cU^{M_k}$ are the approximations to $\cu$ guaranteed to exist by the approximation assumption on the subsets $\cU^M$ given in Assumption 5.1, equation \eqref{eq5.3}.  Then since $\cU$ was assumed to be closed and convex, it is weakly closed.  Therefore, there exist a weakly convergent subsequence, $\{\cu_{L_k}^*\}$ of $\{\cu_L^*\}$ with $\cu_{L_k}^* \rightharpoonup \cu^*$, for some $\cu^* \in \cU$.  In addition, the convexity and (lower semi-) continuity of $J$ and $J^L$ yield that $J$ and $J^L$ are weakly sequentially lower semi-continuous.  Then, Assumptions 5.1 and 5.2, Lemma 5.1 together with the weak lower semicontinuity of $J$ and $J^L$ imply that
\begin{align}
    \begin{aligned}
        J(\cu^*)=J(w-\lim_{k \ra \infty}\cu_{L_k}^*) \leq \liminf_{k \ra \infty} J(\cu_{L_k}^*) = \liminf_{k \ra \infty} J^{L_k}(\cu_{L_k}^*) \\ \leq \limsup_{k \ra \infty} J^{L_k}(\cu_{L_k}^*) \leq \limsup_{k \ra \infty} J^{L_k}(\cu^{M_k}) = J(\lim_{k \ra \infty}\cu^{M_k})=J(\cu), \label{eq5.6d}
    \end{aligned}
\end{align}
where again $L_k=(N_k,M_k)$, and the sequence $\{\cu^{M_k}\} \subset \cU$ with $\cu^{M_k} \in \cU^{M_k}$ are the approximations to $\cu$ guaranteed to exist by the approximation assumption on the subsets $\cU^M$ given in \eqref{eq5.3} (Assumption 5.1), and hence that, $\cu^*$ is the solution of the estimation problem stated in \eqref{eq5.1}.  Finally we note that the strict convexity of  $J(\cu)$ and $J^L(\cu)$ imply that the sequence itself, $\{\cu_L^*\}$ must in fact converge weakly to the unique solution, $\cu^*$, of the estimation problem stated in \eqref{eq5.1}, and because the norm in $S(0,T)$ is bounded above by $J$, that the sequence $\{\cu_L^*\}$ itself must in fact converge strongly, or in the norm in $S(0,T)$, to the unique solution, $\cu^*$, of the estimation problem stated in \eqref{eq5.1}.

\section{Matrix Representations of the Operators and Regularization}\label{Reg}

\indent

In this section we describe how the matrix representations for the finite dimensional operators that appear in the approximating population model, \eqref{eq4.9}, \eqref{eq4.10}, are computed. We also discuss some issues related to the regularization terms that appear in the optimization problems \eqref{eq5.1} and \eqref{eq5.4}. The state, $\cx_{i,j}$, $j=0,1,...,\mu_i$, $i=1,2,...,\nu$, in the population model described by system \eqref{eq4.1}-\eqref{eq4.2}, is a function of the distributed variables, $\eta$, and the random parameters $q_1$ and $q_2$. It is these dependencies that are discretized in our framework. Recall that the state space for the approximating systems \eqref{eq4.9}-\eqref{eq4.10} is the finite dimensional subspace $\cV^N$ of $\cV$. Recall also that these spaces must satisfy the approximation property given in Assumption 5.2, equation \eqref{eq5.3a}. We construct the spaces $\cV^N$ using linear B-splines $\{\varphi_j^n\}_{j=0}^n$ defined with respect to the uniform mesh $\{j/n\}_{j=0}^n$ on the interval $[0,1]$ and use the standard $0^{th}$ order (i.e. piecewise constants) B-splines  $\{\chi_{1,j}^{m_1}\}_{j=1}^{m_1}$ and $\{\chi_{2,j}^{m_2}\}_{j=1}^{m_2}$ defined with respect to the uniform mesh $\{a_i^*+(b_i^*-a_i^*)j/m_i\}_{j=0}^{m_i}$ on the intervals $[a_i^*,b_i^*]$ for $i=1,2$, respectively. Then, letting $N=(n,m_1,m_2)$ as before, and letting $P$ denote the multi-index $P=(j,j_1,j_2)$, we use tensor products to define $\{\psi_P^N\}_P^N$ as $\psi_P^N=\varphi_j^n\chi_{1,j_1}^{m_1}\chi_{2,j_2}^{m_2}$ and set $\cV^N=span\{\psi_P^N\}_P^N$.  Results from \cite{Schultz1973} can be used to establish equation \eqref{eq5.3a} of Assumption 5.2. 

The input signal, $\cu$, is a function of time, $t$, and also the random parameters $q_1$ and $q_2$ and is an element of the space $S(0,T)$ defined in Section 5.  To construct finite dimensional subspaces, $\cS^M$, of $S(0,T)$ that have the approximation property given in equation \eqref{eq5.3} of Assumption 5.1 (see \cite{Schultz1973}), we again use standard linear B-spline polynomials $\{\phi_i^m\}_{i=1}^m$ defined with respect to the usual uniform mesh, $\{iT/m\}_{i=0}^m$, on the interval $[0,T]$ to discretize the time dependency. We use the same $0^{th}$ order B-splines $\{\chi_{1,j}^{m_1}\}_{j=1}^{m_1}$, $\{\chi_{2,j}^{m_2}\}_{j=1}^{m_2}$ for $q_1$ and $q_2$ with respect to corresponding uniform mesh on the intervals $[a_1^*,b_1^*]$ and $[a_2^*,b_2^*]$, respectively, to discretize the dependence on the random parameters. Letting $M=(m,m_1.m_2)$ and $R$ be the multi-index $R=(i,j_1,j_2)$, we again use tensor products to serve as the basis for the approximating subspaces.  We define $\{\xi_R^M\}_R^M$ as $\xi_R^M=\phi_i^m\chi_{1,j_1}^{m_1}\chi_{2,j_2}^{m_2}$ and set $\cS^M=span\{\xi_R^M\}_R^M$. In this way, any $\cu \in\cS^M$ can be written as
\begin{align}
    \cu(t;q)=\sum_{i=1}^m\sum_{j_1=1}^{m_1}\sum_{j_2=1}^{m_2}\cu^M_{i,j_1,j_2}\phi_i^m(t)\chi_{1,j_1}^{m_1}(q_1)\chi_{2,j_2}^{m_2}(q_2).\label{eq6.1}
\end{align}
Now that bases for the approximating spaces have been chosen, the definitions of the finite dimensional operators $\cA^N(\rho^*)$, $\cB^N(\rho^*)$, $\cC^N(\rho^*)$, $\hat{\cA}^N(\rho^*)$, $\hat{\cB}^N(\rho^*)$, and $\hat{\cC}^N(\rho^*)$ given in Section 5 are sufficient to compute corresponding matrix representations. Note that since the finite dimensional input operators act on the input signal $\cu$ which depends on $t$, and have range in $\cV^N$, their matrix representations will depend on the dimension of both $\cV^N$ and $\cS^M$.  Consequently their matrix representations will depend on the multi-index $L = (N,M)$. In this way, the matrix representation of the system \eqref{eq4.9}-\eqref{eq4.10} is of the form 
\begin{align}
    \begin{aligned}
        \mathbb{M}^N\mathbb{X}_{k+1}^N&=\mathbb{K}^N\mathbb{X}_k^N + \mathbb{B}^L\mathbb{U}_k^M,\ k=0,1,2,...,K, \\
        \cy_k^L&=\mathbb{C}^N\mathbb{X}_k^N,\ k=0,1,2,...,K,\label{eq6.2}
    \end{aligned}    
\end{align}
where the vectors $\mathbb{X}_k^N$ are coefficients of the basis elements $\{\psi_P^N\}_P^N$ and the vectors $\mathbb{U}_k^M$ are formed from the coefficients $\cu^M_{i,j_1,j_2}$ in the expansion given in \eqref{eq6.1}.  In the system \eqref{eq6.2} the matrices $\mathbb{M}^N$ and $\mathbb{K}^N$ are $(n+1)m_1m_2\times(n+1)m_1m_2$, $\mathbb{B}^L$ is an $(n+1)m_1m_1\times mm_1m_2$ matrix, and $\mathbb{C}^N$ is an $(n+1)m_1m_1$ dimensional row vector. The state variables $\mathbb{X}_k^N$ are $(n+1)m_1m_2\times1$ vectors and the inputs $\mathbb{U}_k^M$ are $mm_1m_2\times1$ vectors. The matrix representation of the operators $\cA^N$, $\hat{\cA}^N$, $\hat{\cB}^N$, and $\hat{\cC}^N$ in terms of the matrices $\mathbb{M}^N$, $\mathbb{K}^N$, $\mathbb{B}^L$, and $\mathbb{C}^N$ appearing in \eqref{eq6.2} can be used to obtain the matrix representations for the approximating convolution kernels $h_l^N(\rho^*)$, $l=1,2,...,K$, which because of their dependence on $\mathbb{B}^L$, we shall now refer to as $h_l^L(\rho^*)$. In the numerical studies presented in the next section the sampling interval was taken to be $\tau=1$ min.  It follows that $T/\tau=K$ in \eqref{eq5.1} and \eqref{eq5.4}. In the case that the sensor data, $\{\hat{y}_k\}$, has not been collected at 1-min time intervals, we re-sample by interpolating the collected data with a cubic spline.

The cost functionals defined in \eqref{eq5.1} and \eqref{eq5.4} include regularization terms in the form of the square of the norm on $S(0,T)$, $||\cdot||_{S(0,T)}$, which we take to be given by
\begin{align}
    \cR(r_1,r_2)=||u||_{S(0,T)}=\left(r_1\int_0^T||\cu(t)||_{L_{\pi(\rho^*)}^2(Q^*)}^2dt + r_2\int_0^T||\dot{\cu}(t)||_{L_{\pi(\rho^*)}^2(Q^*)}^2dt\right)^{1/2},\label{eq6.3}
\end{align}
where $r_1,r_2>0$ are regularization parameters in the form of nonnegative weights. While choosing regularization parameters can entail a mix of art and science, we chose $r_1$ and $r_2$ as the solution of the following optimization problem based on the original training data used to estimate the distribution of the random parameters in the model.  

Recall the training data from Section 4 consisting of $\nu$ data sets
\begin{align}
(\tilde{u}_i,\tilde{y}_i)_{i=1}^{\nu}=\left(\{\tilde{u}_{i,j}\}_{j=0}^{\mu_i-1},\{\tilde{y}_{i,j}\}_{j=1}^{\mu_i}\right)_{i=1}^{\nu}.
\label{eq6.4}
\end{align}
After using the training data to estimate the distribution of the random parameters, we use the approach discussed in Section 5 together with the training TAC observations, $\{\tilde{y}_{i,j}\}_{j=1}^{\mu_i}$, given in \eqref{eq6.4} to estimate the BrAC inputs $\{\tilde{u}_{i,j}\}_{j=0}^{\mu_i-1}$ also given in \eqref{eq6.4}. Denote these estimates by $\{\tilde{\cu}_{L;i,j}^{*}\}_{j=0}^{\mu_i-1}$.  We then use these estimates of the input together with the approximating population model, \eqref{eq5.2}, to obtain estimates for the corresponding TAC, $\{\tilde{y}_{L;i,j}^{*}\}_{j=1}^{\mu_i}$. Note that both the $\{\tilde{\cu}_{L;i,j}^{*}\}_{j=0}^{\mu_i-1}$ and the $\{\tilde{y}_{L;i,j}^{*}\}_{j=1}^{\mu_i}$ are functions of the weights, $r_1$ and $r_2$ appearing in the cost functional for the input estimation problem.  We then define the performance index   
\begin{align}
    J(r_1,r_2)=\sum_{i=1}^{\nu}\sum_{j=1}^{\mu_i}\left(|\bar{u}_{L;i,j-1}^{*}-\tilde{u}_{i,j-1}|^2+|\tilde{y}_{L;i,j}^{*}-\tilde{y}_{i,j}|^2\right),\label{eq6.5}
\end{align}
where $\bar{u}_{L;i,j}^{*}=\mathbb{E}[\tilde{\cu}_{L;i,j}^{*}|\pi(\rho^*)]$, and choose the regularization parameters, $r_1^*$ and $r_2^*$, to use in our scheme as
\begin{align}
    (r_1^*,r_2^*)=\argmin_{{\mathbb{R}^{+}}^2}J(r_1,r_2),\label{eq6.6}
\end{align}
where $J(r_1,r_2)$ is given by \eqref{eq6.5}.

We note that since the approximating optimization problems given in \eqref{eq5.4} are in fact constrained (i.e. we want all the components of $\mathbb{U}_k^M$ to be nonnegative) linear problems, in practice, we solve them as quadratic programming problems.  Moreover, in finite dimensions, the two integrals in the expression for the regularization term, $\cR(r_1,r_2)$, given in \eqref{eq6.3}, become quadratic forms in the vectors $\mathbb{U}_k^M$ with the two positive definite symmetric matrices, denoted by $\mathbb{Q}_1^M$ and $\mathbb{Q}_2^M$, having entries that are the easily computed $L_{\pi(\rho^*)}^2(Q^*)$ inner products of the basis elements for the $q$ dependencies of the elements in $\cS^M$.  By appropriately placing the matrix representations for the approximating filters $\{h_l^L(\rho^*)\}$ in the block matrix $\mathbb{H}^L$, the approximating optimization problem given in \eqref{eq5.4} now takes the equivalent form
\begin{align}
    \cu_L^*=\argmin_{\cU^M}J^L(\cu)=\argmin_{\cU^M}\left|\left|\begin{bmatrix}\mathbb{H}^L\\ (r_1^*\mathbb{Q}_1^M + r_2^*\mathbb{Q}_2^M)^{\frac{1}{2}}\end{bmatrix}\mathbb{U}^M - \mathbb{Y}^M\right|\right|_{\mathbb{R}^{K+Kmm_1m_2}}^2,\label{eq6.7}
\end{align}
where $(r_1^*,r_2^*)$ come from \eqref{eq6.6}, $\mathbb{U}^M$ is the $Kmm_1m_2$ dimensional column vector of the coefficients of the $\cu \in \cU^M$, and $\mathbb{Y}^M$ is the $K+Kmm_1m_2$ dimensional column vector consisting of the $K$ TAC data points $\{\hat{y}_k\}$ followed by $Kmm_1m_2$ zeros. The optimization problem given in \eqref{eq6.7} with the constraint $\mathbb{U}^M \geq 0$ is readily solved using the MATLAB routine LSQNONNEG.

\section{Numerical Results}\label{num.res}

\indent

In this section, we present our numerical results for the estimation of BrAC using our mathematical framework with actual alcohol biosensor data.  Specifically, we consider two examples. In the first example, the data was collected from a single individual over multiple drinking episodes while in the second, we use data collected from multiple subjects each providing a single drinking episode. Both datasets contain both TAC and BrAC measured simultaneously (albeit at different sampling rates) by alcohol biosensor devices and breath analyzers, respectively, during drinking episodes conducted both in the laboratory and out in the field (in Example 7.2 below, only the data collected in the laboratory were used becasue these were the only ones for which we were provided both TAC and BrAC). This way the data can be used to both fit a population model and, using cross-validation techniques, test our scheme's ability to deconvolve or recover BrAC in the form of eBrAC from the TAC signal.  Note here that a $\text{WrisTAS}^{\text{TM}}7$ alcohol biosensor designed and manufactured by Giner, Inc. of Waltham, MA was used for TAC measurements in the first dataset, while the device used in the second dataset was a Secure Continuous Alcohol Monitoring System (SCRAM) device manufactured by Alcohol Monitoring Systems in Littleton, Colorado (see Figure \ref{fig1}).
\begin{figure}[ht]
\centering
\includegraphics[scale=0.6]{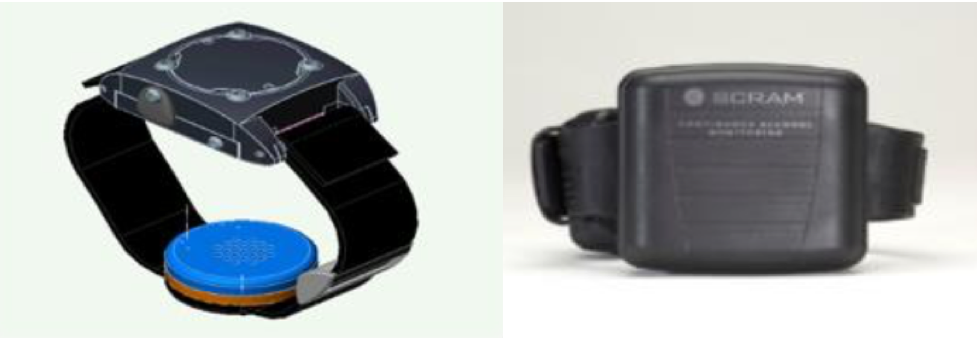}
\caption{Alcohol Biosensor Devices: The WrisTAS (left) and the SCRAM (right).}
\label{fig1}
\end{figure}

Before discussing our estimation results, we describe how we obtained the population model (these results are described in detail in \cite{AUTO}).  We assumed that the joint pdf of the two (now assumed to be random) parameters that appear in the model \eqref{eq2.6}-\eqref{eq2.10} have compact support and takes the form of a truncated exponential family of distributions (see \cite{JMAA}). Recalling that ${q}=(q_1,q_2)$ (or, more precisely, ${\cq}=(\cq_1,\cq_2)$) represents the random parameters, we let their supports be determined by the four parameters in the two vectors, $\vec{a}=(a_1,a_2)$, $\vec{b}=(b_1,b_2)$, as $[a_1,b_1]\times[a_2,b_2]$. In both examples we assumed that the distribution of the parameters was a truncated bivariate normal with mean $\mu$ and covariance matrix $\Sigma$ and joint pdf denoted by $\phi(\cdot;\vec{\mu},\Sigma)$. Then, we set
\begin{align}
    f(q;\vec{a},\vec{b},\mu,\Sigma)=\frac{\phi(q;\mu,\Sigma)\chi_{[a_1,b_1]\times[a_2,b_2]}(q)}{\int_{a_2}^{b_2}\int_{a_1}^{b_1}\phi(q;\mu,\Sigma)dq_1dq_2}.\label{eq7.1}
\end{align}

In each example, a portion of the data (henceforth referred to as the population or training data) was used to fit the population model and a portion was held back for cross validation. The training data was first used to estimate $\vec{a}$, $\vec{b}$, $\mu$, and $\Sigma$ (see \cite{AUTO} for details). Then, using the population model constructed using this fit distribution, we applied our scheme to both the population and cross validation data to estimate BrAC from the measured TAC. Furthermore, in the studies described below, we found it useful to stratify the data before attempting to fit the distribution of the random parameters.  Following training the population model, any new subjects would be placed in one of the strata and then the appropriate fit distribution for that strata would be used when deconvolving the BrAC from the TAC measurements. In the two examples we present here, since our goal was simply to demonstrate proof-of-concept and the basic efficacy of our approach, we simply stratified the population data and the cross validation data based on prior knowledge of both the BrAC and the TAC.  In practice, of course, this is unrealistic in that once the training of the model on the population has been completed using both BrAC and TAC, for any new subjects, only the TAC would be known and available.  We are currently looking at the effect on our approach of stratifying based on readily observable, identifiable, and determinable characteristics (covariates) of a subject such has height, weight, body mass index (BMI), sex, drinking behavior (heavy, teetotaler, etc.), genetic profile, ethnicity, and so on. These studies are currently well underway and will be reported on elsewhere in the near future. 

In addition, we also obtained 75$\%$ credible bands for our BrAC estimates. To do this, we generated 1,000 random samples from the population distribution (i.e. the fit truncated bivariate normal), and selected the ones lying in the circular region $R\subset[a_1,b_1]\times[a_2,b_2]$ centered at the mean where $R$ was chosen so that $\iint\limits_{R}f(q;\vec{a},\vec{b},\mu,\Sigma)dq=0.75$. Since our scheme provides an estimate of BrAC as a function of $q=(q_1,q_2)$ (and of course of time, as well), we could obtain the corresponding samples of the BrAC by simply evaluating this function of $\cq=(\cq_1,\cq_2)$ at the samples of the distribution.  To actually obtain the credible bands, at each time $t_j$, we identified the maximum and minimum value of the BrAC among all the samples thus providing 75$\%$ likely upper and lower bounds, respectively, for the BrAC estimates.  
 
In addition, for comparison, we also examined the case where the estimated input is considered to be a function of time only (and therefore not of $q=(q_1,q_2)$ or $\cq=(\cq_1,\cq_2)$). In this case, we used the $\cB$ operator as it was defined in \eqref{eq3.13} and followed the same steps with regard to the data as in the case $\cu$ depending on both $t$ and $q$. However, in this case, the credible bands had to be determined via simulation.  That is, for each sample from the distribution substituted into the model equations, \eqref{eq2.6}-\eqref{eq2.10}, the optimal estimate for the BrAC was found deterministically as in \cite{Rosen2014}. Then the credible bands at each time $t_j$ were found as they were in the previous case by identifying the maximum and minimum value of the BrAC among all the samples to obtain 75$\%$ likely upper and lower bounds for the BrAC estimates.  
 
All computations were carried out in MATLAB on either MAC or PC laptops or desktops. To solve optimization problems \eqref{eq4.8} that result in the estimate for the distribution of the parameters, we used the MATLAB Optimization Toolbox constrained optimization routine FMINCON which requires the computation of the gradient of the cost functional with respect to the parameters.  We used both the adjoint method together with the sensitivity equations \eqref{eq4.14}-\eqref{eq4.18},  and finite differencing, independently, and both methods yielded the same result. Since the only constriant on the BrAC signal is that it be nonnegative (we do not actually enforce the constraint that the admissible input set be compact), we used the MATLAB routine LSQNONNEG to solve the non-negatively constrained linear system problems \eqref{eq5.4} or \eqref{eq6.7} associated with the deconvolution of the BrAC estimates. After estimating the distribution of the random parameters, we found the optimal regularization parameters $r_1^*$ and $r_2^*$ by solving the optimization problem given in \eqref{eq6.5} or \eqref{eq6.6} using the MATLAB routine FMINSEARCH.  In both of the examples that follow we took $n=m_1=m_2=4$ and $m = 6T_h$, where $T_h$ is the number of hours of TAC data available for the drinking episode(s) being analyzed.
 
In estimating the parameters for the distributions, each BrAC/TAC dataset in population or training data was used to find estimates for the parameters, $q=(q_1,q_2)$ via deterministic nonlinear least squares.  Sample means and covariances of these estimates were then computed and used as initial guesses or estimates when the optimization problem given in \eqref{eq4.8} was solved.

\subsection{Example: Single Subject, Multiple Drinking Episodes; Data Collected with the \texorpdfstring{$WrisTAS^{TM}7$}{} Alcohol Biosensor}\label{ex1}
 
\indent
 
One of the co-authors of this paper (S. E. L.) wore a $\text{WrisTAS}^{\text{TM}}7$ alcohol biosensor device for 18 days. During each drinking episode, she collected BrAC data (i.e. blew into a breath analyzer) approximately every 30 minutes. The first drinking episode was conducted in the laboratory and BrAC was measured every 15 minutes until it returned to 0.000. Then she wore the biosensor device for the following 17 days and consumed alcohol ad libitum. During those days, BrAC was measured every 30 minutes starting from the beginning of the drinking session until its value returned to 0.000. The $\text{WrisTAS}^{\text{TM}}7$ measured and recorded ethanol level at the skin surface every 5-minutes.  It is important to note that during those 17 days, the data was collected in a naturalistic setting.
\begin{figure}[ht]
\centering
\includegraphics[scale=0.3]{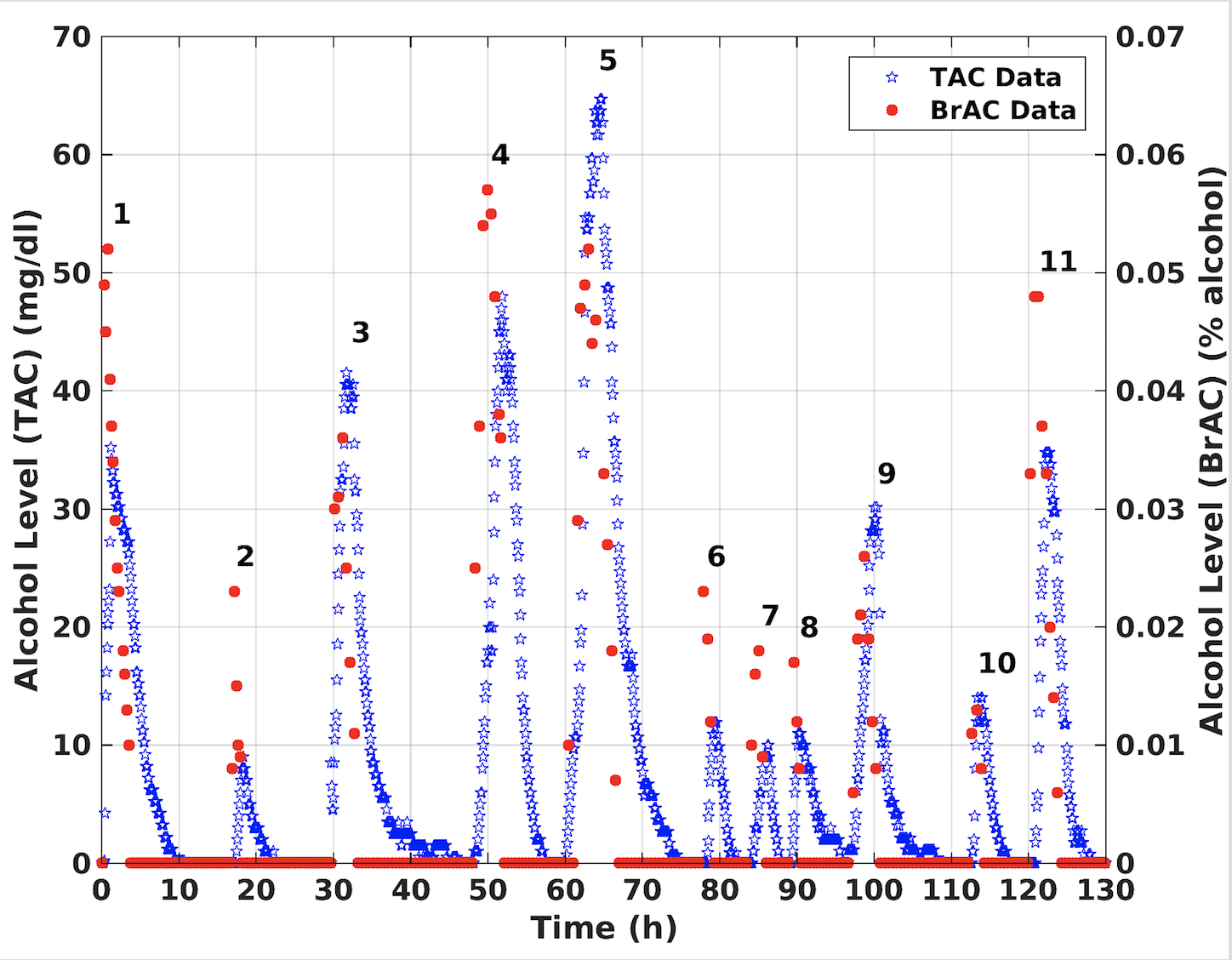}
\caption{BrAC and TAC measurements for Example 7.1.}
\label{fig2}
\end{figure}

The plot in Figure \ref{fig2} shows the measured BrAC and TAC over 11 drinking episodes. Using this plot, we visually stratified the dataset into two groups. The first group contains the data belonging to drinking episodes 1, 2, 4, 6, 7, 8, 11. In each of these drinking episodes, the peak BrAC value was higher than the bench calibrated peak TAC value. The second group contains the remaining drinking episodes, 3, 5, 9, and 10 for which the peak TAC value was higher than the peak BrAC value. In the example we present here, we considered only the first group.  Our results for the second group and all 11 drinking episodes taken together all at once were similar.  In the first group, we randomly chose the drinking episodes 4 and 8 for cross validation and used the remaining five, epsiodes 1, 2, 6, 7, and 11, to train the population model (i.e. these five episodes served as the training or population data).  We obtained the optimal parameters for the truncated bivariate normal distribution as follows: the truncated support was determined to be $a_1^*=0$, $b_1^*=1.4942$ and $a_2^*=0$, $b_2^*=2.0409$, and the optimal values for the mean and covariance matrix were found to be
\begin{align*}
    \mu^*=\begin{bmatrix}
    0.6245\\
    1.0274
    \end{bmatrix}
    \ \ \ \ \text{and} \ \ \ \
    \Sigma^*=\begin{bmatrix}
    0.0259 & 0.0067\\
    0.0067 & 0.1227
    \end{bmatrix}.
\end{align*}
The plot of the optimal joint density function corresponding to these optimal parameters is shown in Figure \ref{fig3}. In addition, the optimal regularization parameters were found to have the values $r_1^*=0.1591$ and $r_2^*=0.6516$ for the regularization term in \eqref{eq5.1} as defined in \eqref{eq6.3} in the case the input depends on both $t$ and $q$. Those values became $r_1^*=0.0000$ and $r_2^*=0.9653$ in the case $u$ depends only on $t$.
\begin{figure}[ht]
\centering
\includegraphics[scale=0.2]{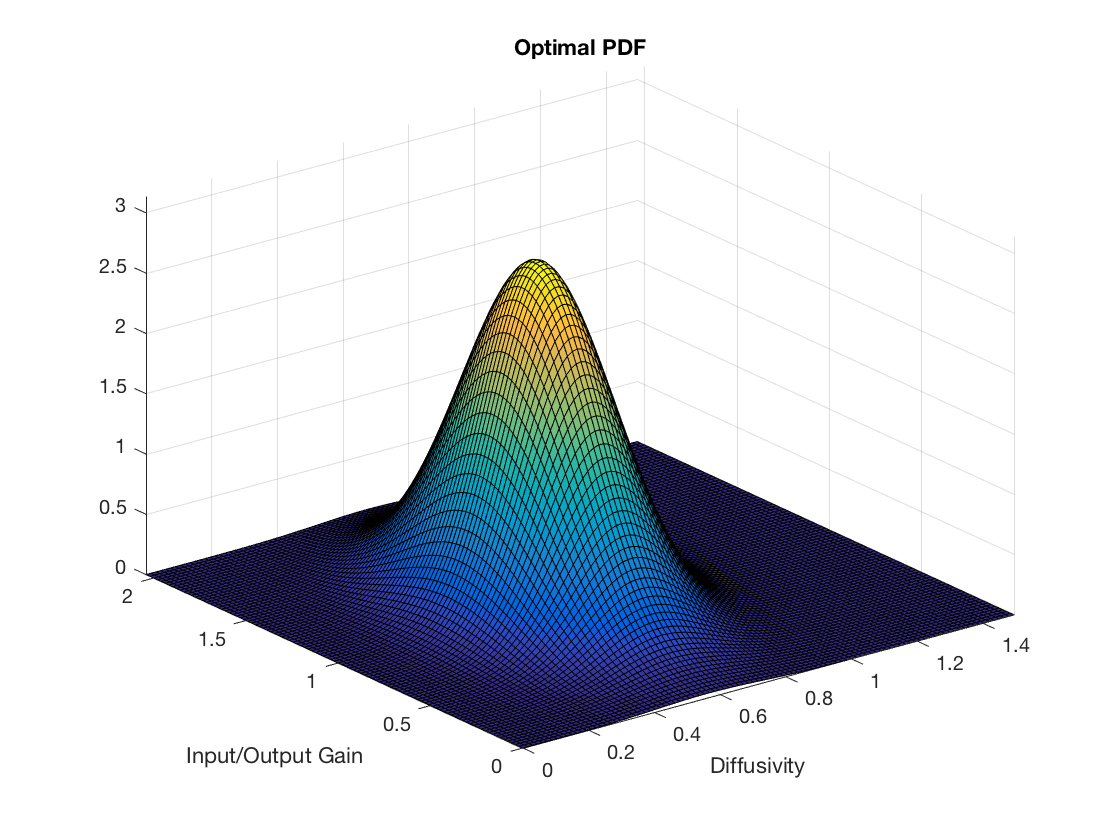}
\caption{Results for Example 7.1: Optimal pdf obtained using the data from drinking episodes 1, 2, 6, 7, and 11 as the population or training dataset.}
\label{fig3}
\end{figure}

As described previously in this section, we took two different approaches in estimating the input: (1) we sought $\cu$ is a function of both $t$ and $\cq$ and (2) we assumed that $u$ was a function of $t$ only. The estimation results for the training drinking episodes 1, 2, 6, 7, 11 that are obtained by each of the methods separately can be seen in Figure \ref{fig4}. The cross validation results for the drinking episodes 4 and 8 are shown in Figure \ref{fig5}.
\begin{figure}[ht]
\centering
\includegraphics[scale=0.32]{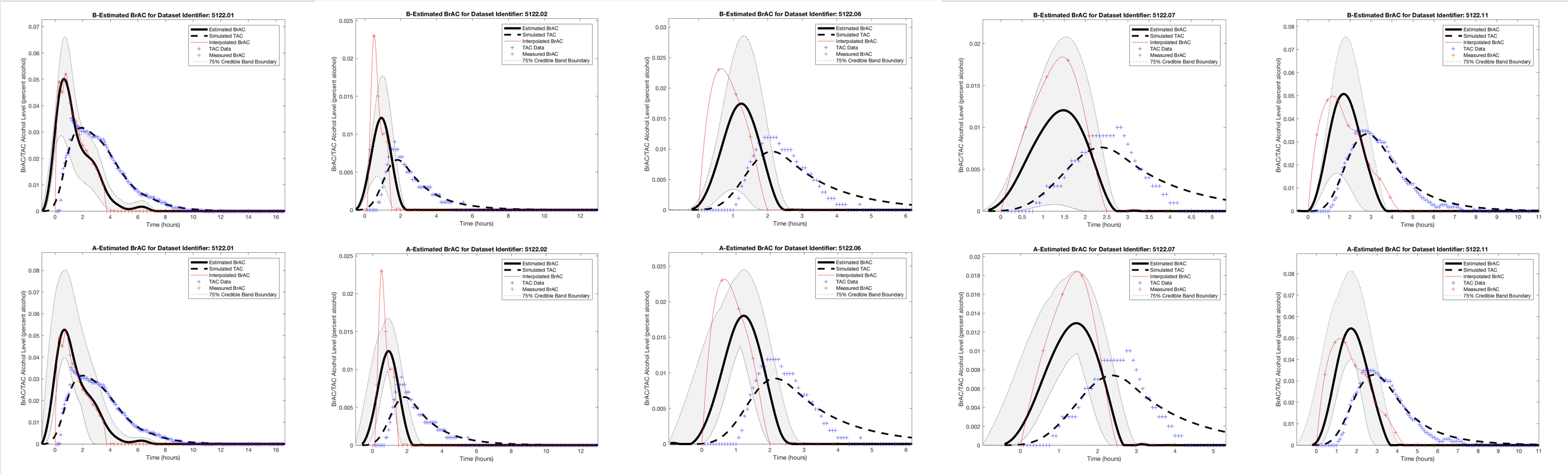}
\caption{Results for Example 7.1: Top row: BrAC estimates for the training datasets assuming the input, $\cu$, depends on both time, $t$, and the random parameters, $\cq$. Bottom row: BrAC estimates for the training datasets assuming the input, $u$, depends only on time, $t$.}
\label{fig4}
\end{figure}
\begin{figure}[b]
\centering
\includegraphics[scale=0.3]{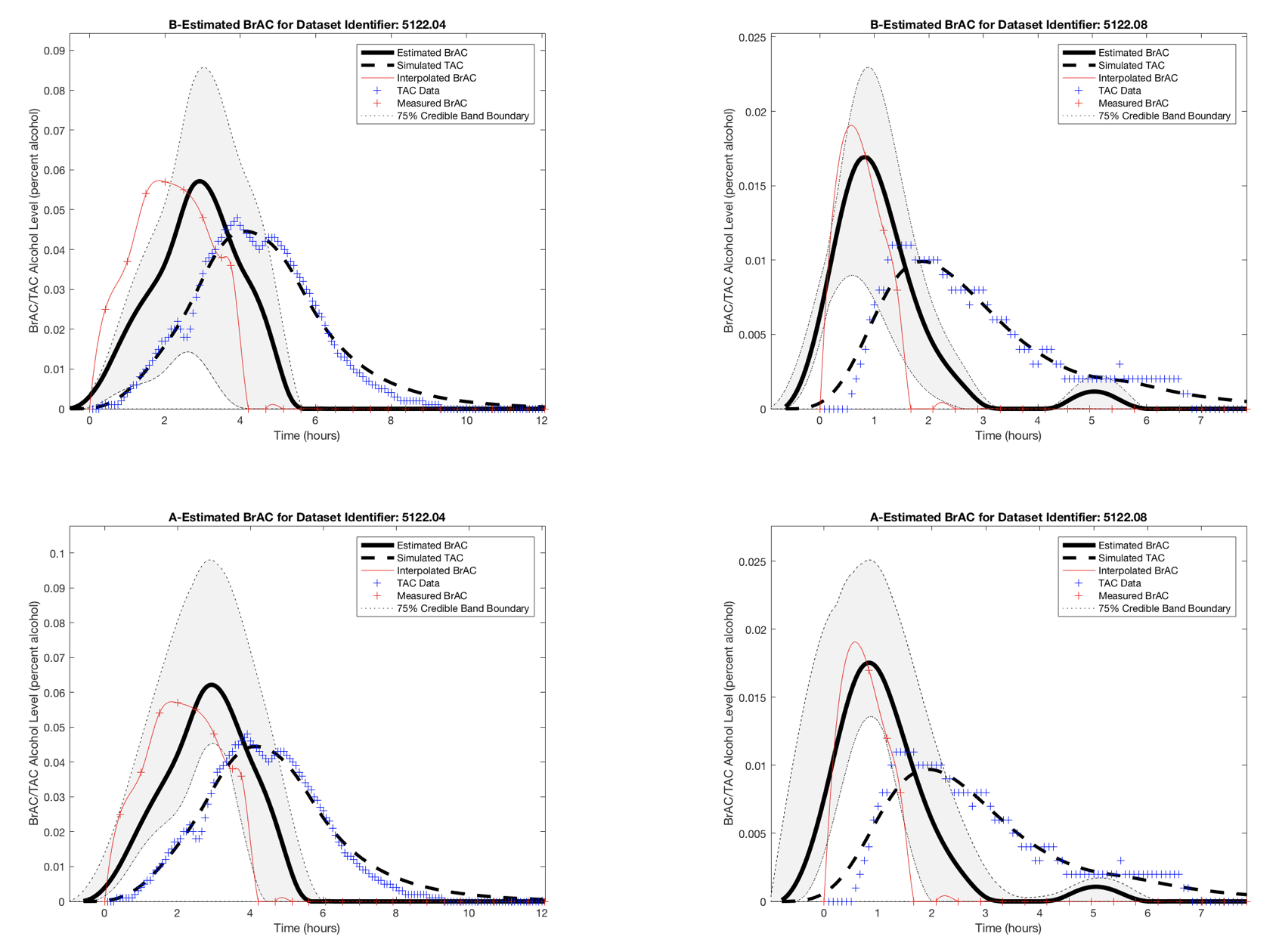}
\caption{Results for Example 7.1: Top row: BrAC estimates for the cross validation datasets assuming the input, $\cu$, depends on both time, $t$, and the random parameters, $\cq$. Bottom row: BrAC estimates for the training datasets assuming the input, $u$, depends only on time, $t$.}
\label{fig5}
\end{figure}

In the upper left hand panel of Figure \ref{fig11} we have plotted the mean or expected value of the approximating impulse response function or convolution kernel for the population model, given by
\begin{align}
    \begin{aligned}
        h_l^N(\rho^*)&=\cC(\rho^*)\hat{\cA}^N(\rho^*)^{l-1}\cA^N(\rho^*)^{-1}(\hat{\cA}^N(\rho^*)-I)\cB^N(\rho^*) \\
        &=\cC(\rho^*)\hat{\cA}^N(\rho^*)^{l-1}\cA^N(\rho^*)^{-1}(\hat{\cA}^N(\rho^*)-I)\bar{\cP}_{\cH}^N\cB(\rho^*), \label{eq7.1a}
    \end{aligned}    
\end{align}
$l=1,2,...,K$, together with the 75\% credible band.  Recall that for each $l=1,2,...,K$, $h_l^N(\rho^*)$, or at least its representer, is an element $L_{\pi(\rho^*)}^2(Q^*)$ and thus is a function of $\cq$, and consequently, the mean and credible bands can be obtained by simply substituting in samples of $\cq$. Note that as a result of the fact that the bases for our approximating subspaces are tensor products, the impulse response function for the scheme in which we simply sought an estimate for BrAC that was a function of $t$ only (and not of $q_1$ and $q_2$), turn out to be the mean of $h_l^N(\rho^*)$, $l=1,2,...,K$, $E[h_l^N(\rho^*)|\pi(\rho^*)]$, and as such are plotted in this figure as well. In the upper right hand panel of Figure \ref{fig11}, the surface, $h_l^N(\rho^*)$, as a function of $q_1$ and $q_2$ at the time at which $E[h_l^N(\rho^*)|\pi(\rho^*)]$ is at its peak, is plotted. In the lower left hand panel of Figure \ref{fig11} we have plotted the estimated BrAC for drinking episode 1 as a function of $q_1$ and $q_2$ at the time at which its mean is at its peak, and finally in the lower right hand panel of Figure \ref{fig11} the probability density function for the estimated BrAC at the time it is at its peak is plotted. Once again we note that since the estimated BrAC at each time $t$ is an element of $L_{\pi(\rho^*)}^2(Q^*)$ and thus is a function of $\cq$, this pdf can be obtained by simply generating samples of $\cq$ from the bivariate normal distribution determined by the parameters $\rho*$ and then simply substituting them into the obtained expression for the estimated BrAC, or eBrAC, \eqref{eq6.1}.     
\begin{figure}[ht]
\centering
\includegraphics[scale=0.5]{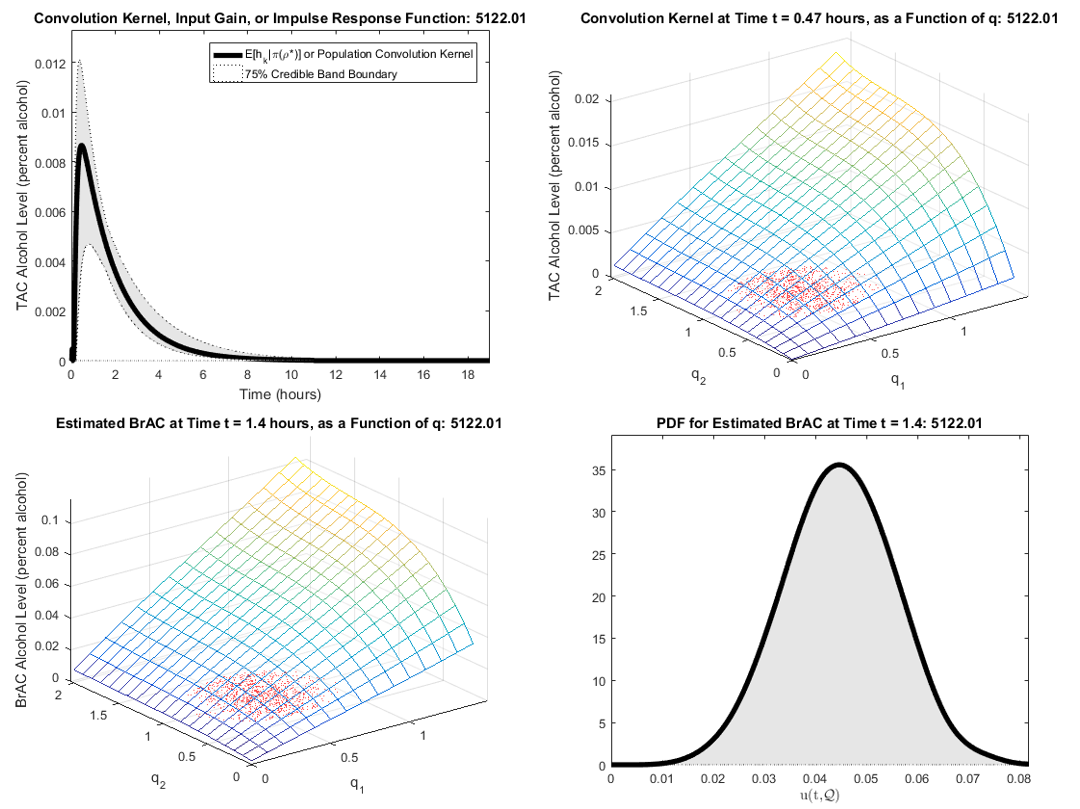}
\caption{Upper Left Panel: Expected value of the impulse response function or convolution kernel together with 75\% credible intervals for population model in Example 7.1. Upper Right Panel: Impulse response function or convolution kernel for population model in Example 7.1 as a function of $q=(q_1,q_2)$. Lower Left Panel: Estimated BrAC, or input signal, $\cu$ at time when the expected value is at its peak as a function of $q=(q_1,q_2)$ for drinking episode 1 in Example 7.1. Lower Right Panel: Probability density function for the estimated BrAC, or input signal, $\cu$ at time when the expected value is at its peak for drinking episode 1 in Example 7.1. The points marked with red dots in the $q_1,q_2$ plane in the upper right and lower left panels are the samples from the bivariate normal distribution that were used to compute the 75\% credible bands.}
\label{fig11}
\end{figure}

\subsection{Example: Multiple Subjects; Data Collected with the Alcohol Monitoring Systems (AMS) SCRAM Alcohol Biosensor}\label{ex2}

\indent

In this example, we used datasets collected from multiple subjects at the University of Illinois at Urbana-Champaign using the AMS SCRAM alcohol biosensor. In this study 60 subjects or participants were given a gender and weight adjusted dose of alcohol (0.82 g/kg for men, 0.74 g/kg for women).  This dosage was selected so as to yield a peak BrAC of approximately .08\%. The alcoholic beverage was administered in 3 equal parts at 0 min, 12 min, and 24 min, and participants were instructed to consume their beverages as evenly as possible over these intervals. All participants were then asked to provide breathalyzer readings at approximately 30 minute intervals. The SCRAM sensor was worn and provided transdermal readings also at a rate of one approximately every 30 minutes. This data was collected for a study other than the one being reported on here.  The design protocol for that study called for each subject’s alcohol challenge session to end when their BrAC dipped below 0.03\% and/or their transdermal readings had reached a peak and begun to descend (see \cite{Fair}). Since typically BrAC leads TAC, in 37 of the original 60 sessions, when the session was halted, the TAC signal had either not yet started its descent, or had not been decreasing for a sufficient amount of time to establish an elimination trend.  For this reason, these sessions were deemed inappropriate for our study here, and were eliminated from further consideration. Of the remaining 23 sessions, 5 displayed what we would characterize as physiologically anomalous behavior in that the TAC led the BrAC. This would be inconsistent with our modeling assumption that there is no alcohol present in the subject’s body at time $t = 0$.  The possible causes for this might include the participant’s failure to adhere to the protocol laid out in the study design, an error in data recording, a sensor hardware malfunction, or the TAC sensor coming into contact with alcohol vapor either as the alcohol dose was being prepared or administered, or from some other source containing ethanol such as skin creams, etc. In any case, we eliminated these 5 subjects from further consideration in our study as well, leaving us with 18 usable participant data sets.
\begin{figure}[t]
\centering
\includegraphics[scale=0.15]{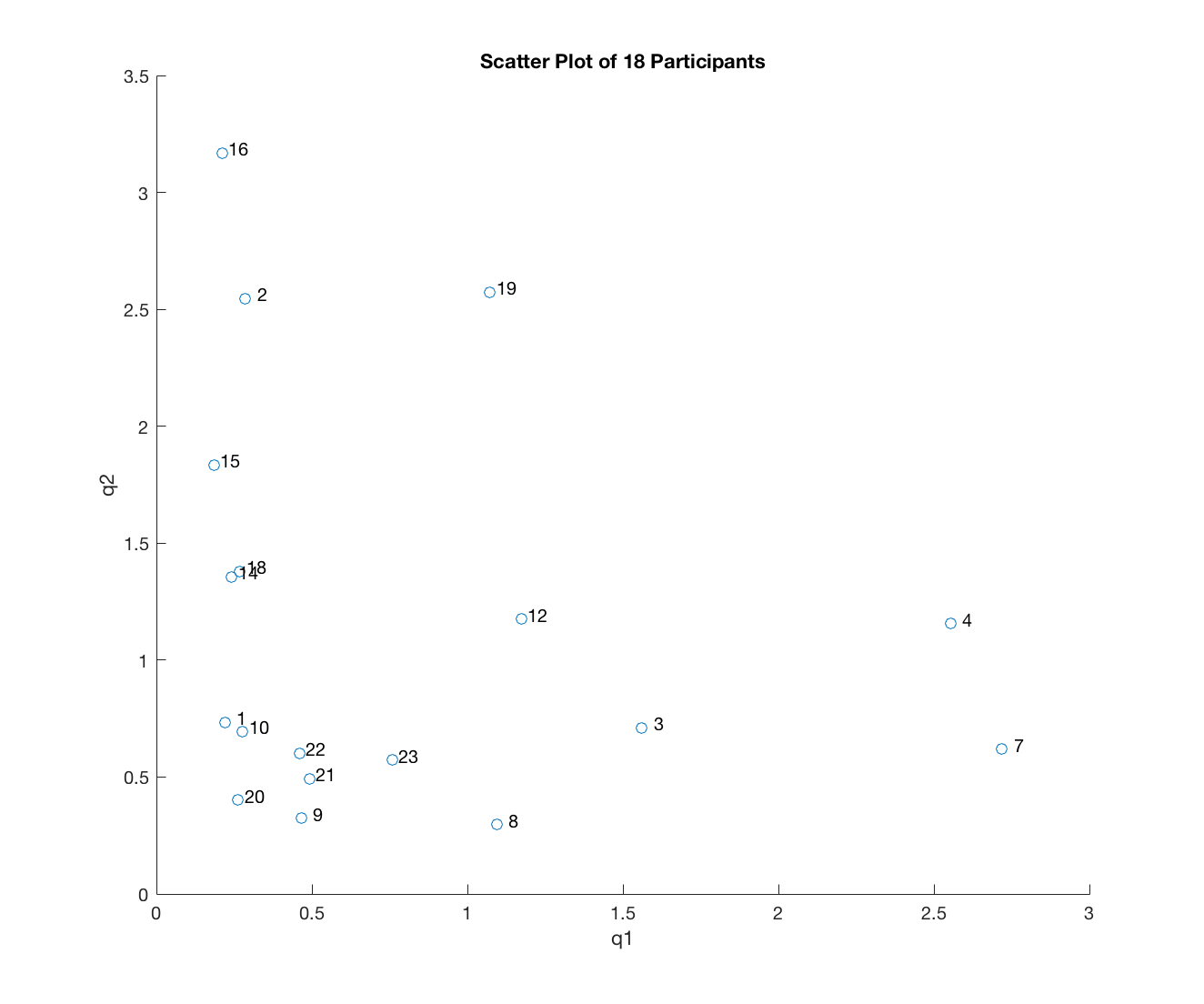}
\caption{Scatter plot of deterministically obtained parameter estimates for the 18 datasets considered in Example 7.2}
\label{fig6}
\end{figure}
\begin{figure}[ht]
\centering
\includegraphics[scale=0.2]{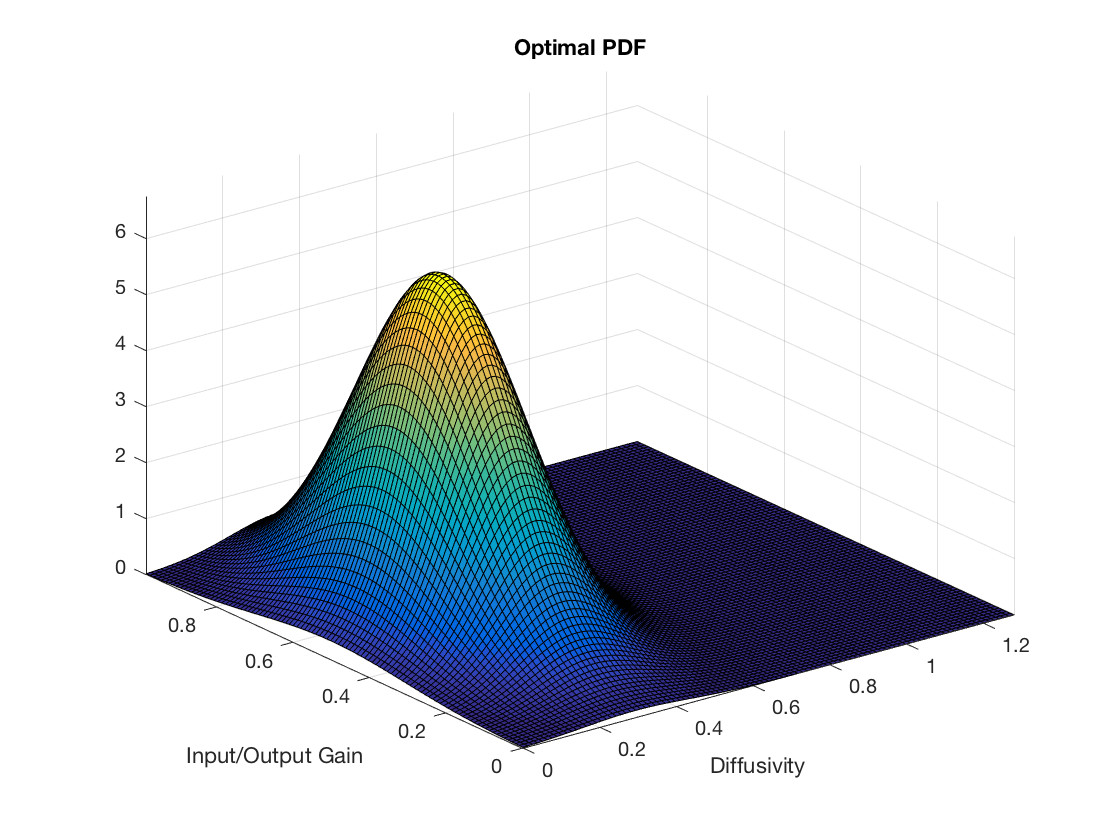}
\caption{Plot of the optimal truncated bivariate normal pdf obtained for Example 7.2.}
\label{fig7}
\end{figure}
We first used our deterministic model to estimate the values for $q=(q_1,q_2)$ for each of the 18 datasets separately. The scatter plot of these $q$ values can be seen in Figure \ref{fig6}. According to this plot, we again visually stratified the datasets by selecting the data of eight of the subjects, subjects 1, 8, 9, 10, 20, 21, 22, and 23, whose $q$ values clustered around the origin. We then used subjects 9 and 21 for cross validation and used the remaining six subjects, numbers 1, 8, 10, 20, 22, and 23, for training.

Following the same steps as we did in Example 7.1 above, for this training population we found the optimal values for the support of the pdf of the truncated bivariate normal distribution to be $a_1^*=0$, $b_1^*=1.2796$ and $a_2^*=0$, $b_2^*=0.9834$. We determined the optimal mean and covariance matrix to be
\begin{align*}
    \mu^*=\begin{bmatrix}
    0.3296\\
    0.3418
    \end{bmatrix}
    \ \ \ \ \text{and} \ \ \ \
    \Sigma^*=\begin{bmatrix}
    0.0187 & 0.0023\\
    0.0023 & 0.0378
    \end{bmatrix}.
\end{align*}
The plot of the optimal pdf is shown in Figure \ref{fig7}. Similarly, we obtained the optimal regularization parameters for this example, as well. For the case in which $\cu$ depends on both time, $t$, and the random parameters, $\cq$, these parameters were found to be $r_1^*=0.0000$ and $r_2^*=3.1877$ while they were $r_1^*=0.0503$ and $r_2^*=5.0974$ for the case $u$ depending only on time, $t$.
\begin{figure}[ht]
\centering
\includegraphics[scale=0.35]{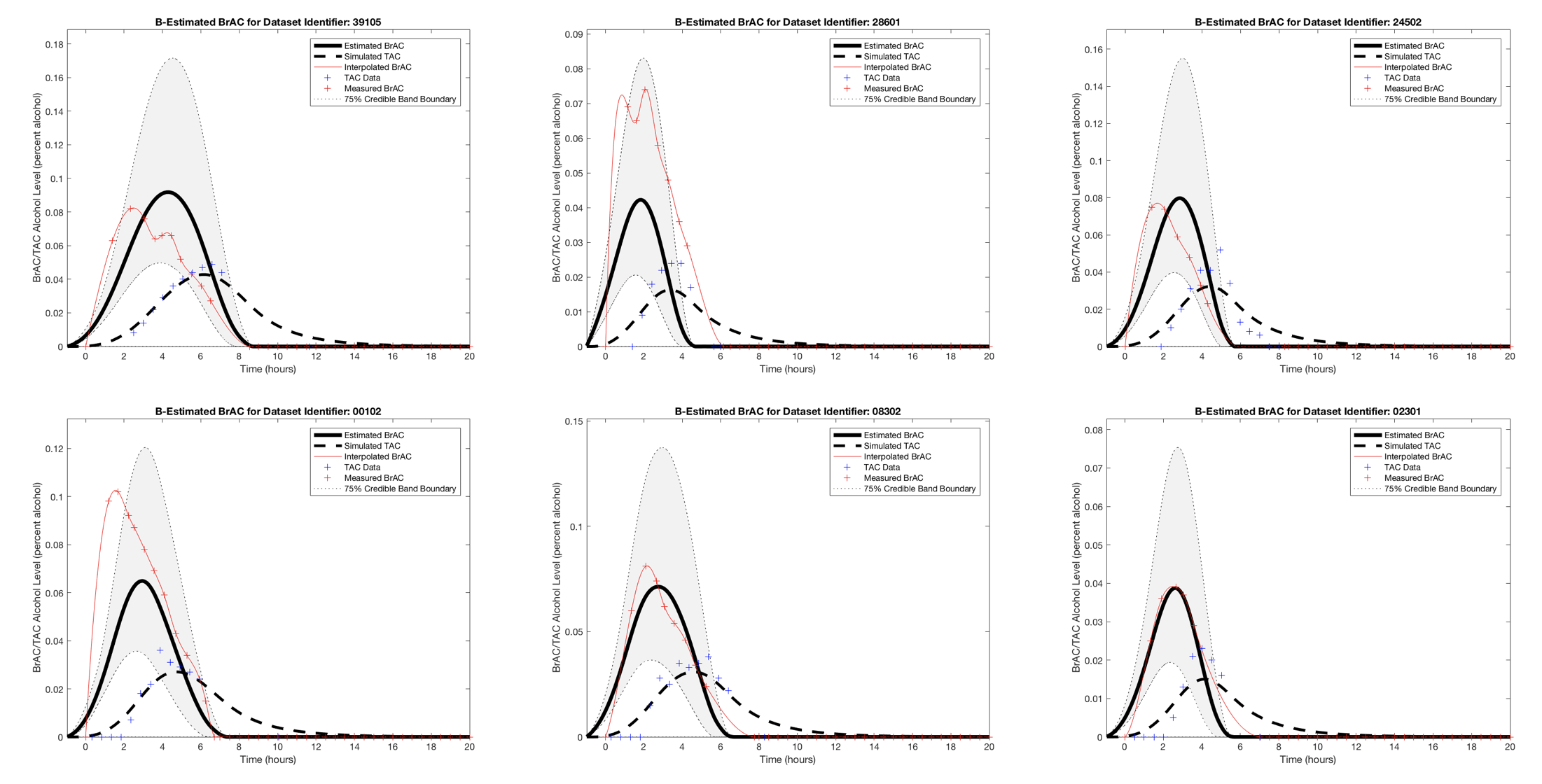}
\caption{Results for Example 7.2: BrAC estimates for the population or training datasets, 1, 8, 10, 20, 22, and 23, assuming the input, $\cu$, depends on both time, $t$, and the random parameters, $\cq$.}
\label{fig8}
\end{figure}

The input or BrAC estimates for the subjects 1, 8, 10, 20, 22, and 23, which were the ones used for training, can be seen in Figures \ref{fig8} and \ref{fig9}. Our results for the case $\cu$ depends on both $t$ and $\cq$ are in Figure \ref{fig8}, and our results for the case $u$ depends on $t$ only are  shown in Figure \ref{fig9}. Analogous results for the cross validation subjects, 9 and 21, can be found in Figure \ref{fig10}.  Figure \ref{fig12} has the plots for Example 7.2 that are analogous to the plots in \ref{fig11} for Example 7.1.
\begin{figure}[ht]
\centering
\includegraphics[scale=0.35]{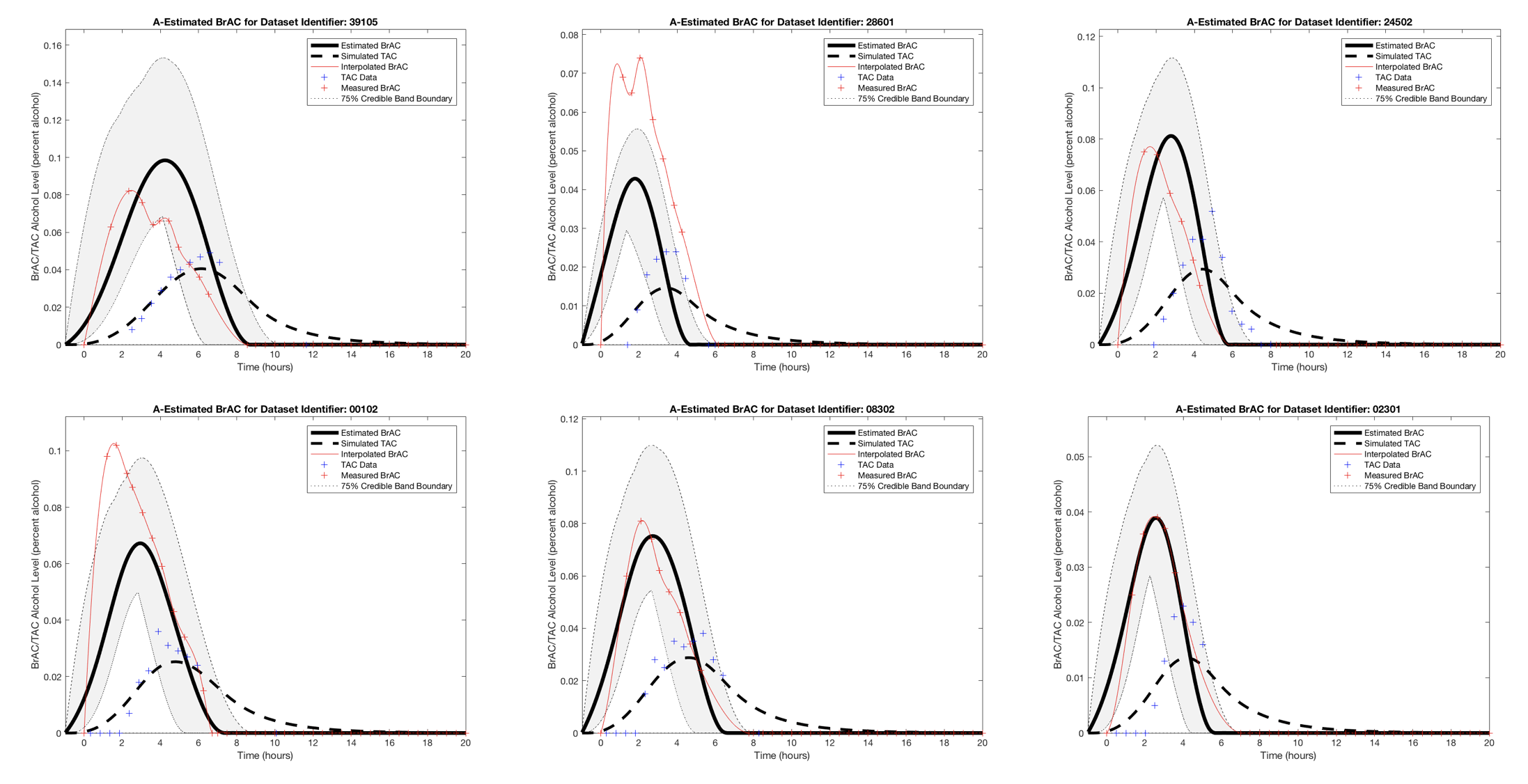}
\caption{Results for Example 7.2: BrAC estimates for the population or training datasets, 1, 8, 10, 20, 22, and 23, assuming the input, $u$, depends only on time, $t$.}
\label{fig9}
\end{figure}
\begin{figure}[ht]
\centering
\includegraphics[scale=0.35]{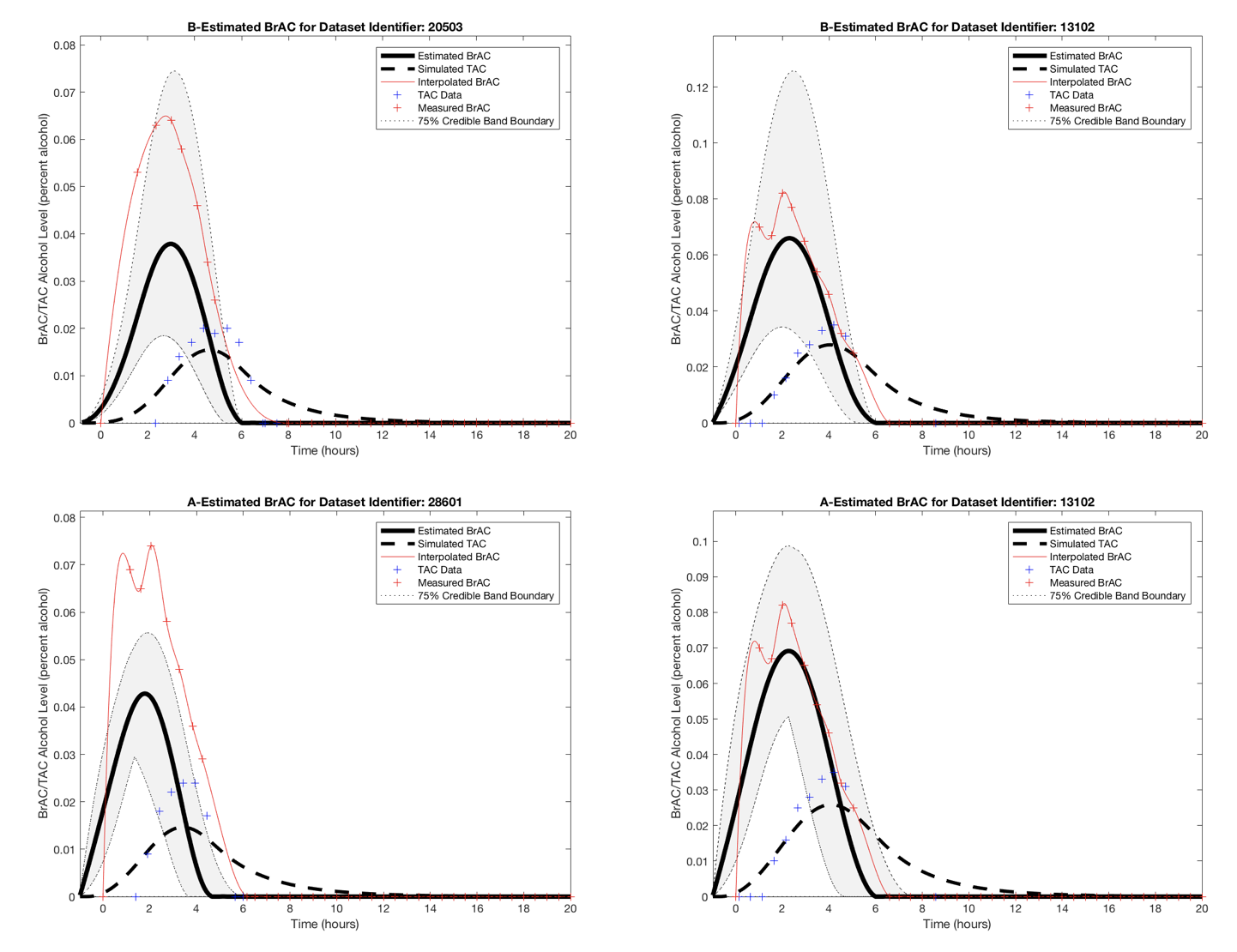}
\caption{Results for Example 7.2: Top row: BrAC estimates for the cross validation datasets, 9 and 21, assuming the input, $\cu$, depends on both time, $t$, and the random parameters, $\cq$. Bottom row: BrAC estimates for the cross validation datasets, 9 and 21, assuming the input, $u$, depends only on time, $t$. }
\label{fig10}
\end{figure}
\begin{figure}[ht]
\centering
\includegraphics[scale=0.5]{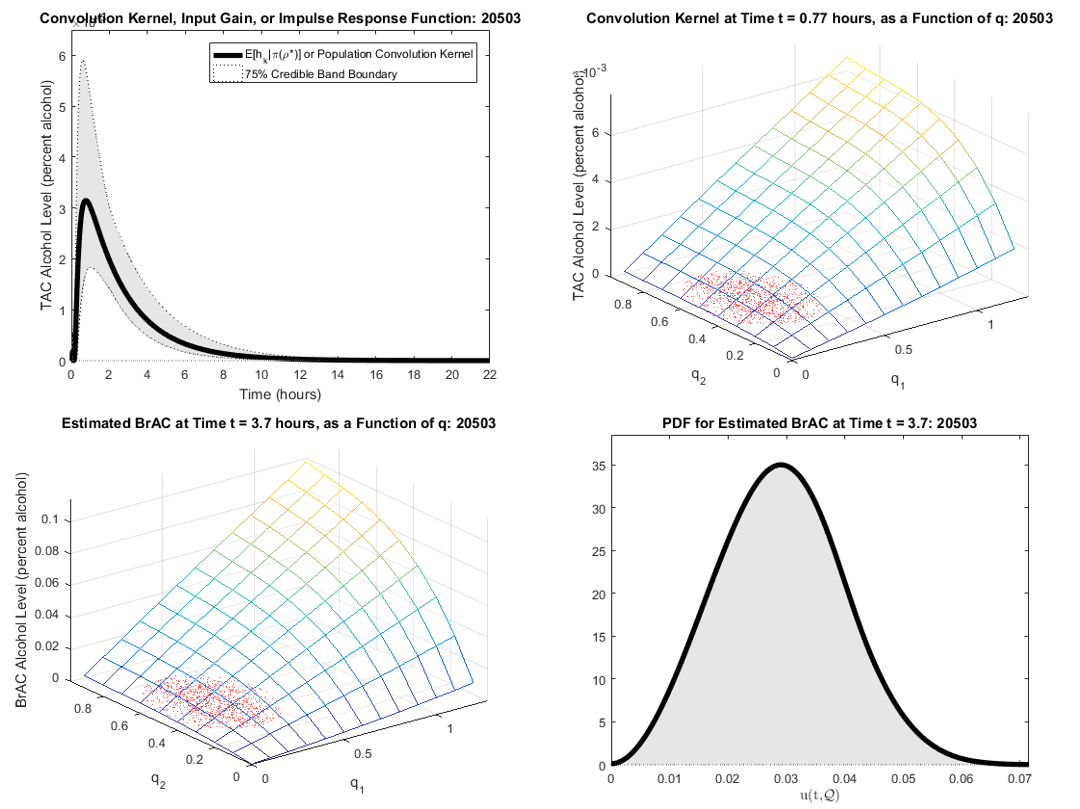}
\caption{Upper Left Panel: Expected value of the impulse response function or convolution kernel together with 75\% credible intervals for population model in Example 7.2. Upper Right Panel: Impulse response function or convolution kernel for population model in Example 7.2 as a function of $q=(q_1,q_2)$. Lower Left Panel: Estimated BrAC, or input signal, $\cu$ at time when the expected value is at its peak as a function of $q=(q_1,q_2)$ for subject number 9 in Example 7.2. Lower Right Panel: Probability density function for the estimated BrAC, or input signal, $\cu$ at time when the expected value is at its peak for subject number 9 in Example 7.2. The points marked with red dots in the $q_1,q_2$ plane in the upper right and lower left panels are the samples from the bivariate normal distribution that were used to compute the 75\% credible bands.}
\label{fig12}
\end{figure}

\newpage
\section{Discussion and Conclusion}\label{disc}

\indent

In the two examples in the previous section, we computed estimated BrAC using two approaches, one in which we allowed the estimated BrAC signal to depend on the random parameters, $\cq_1$ and $\cq_2$, in addition to depending on time, and the other where we sought estimated BrAC as a function of time only. The first method has the benefit of allowing for the efficient computation of credible bands by simply sampling the distribution of the random parameters and then directly substituting the samples into an expression for the estimated BrAC as a function of $q_1$ and $q_2$.  In the second method, credible bands had to be computed via sampling and simulation.  With the first method, training takes longer because of the time involved in solving the optimization problem \eqref{eq6.7} associated with the computation of the optimal regularization parameters. Then the deconvolution of the estimated BrAC signal and computation of the associated credible bands is very quick.  On the other hand, for the second approach, the training is quicker than the first, but because of the need to simulate the deterministic \eqref{eq3.1}-\eqref{eq3.3} model with each of the random samples of $\cq_1$ and $\cq_2$, the deconvolution and credible band determination step is significantly slower than the first approach.  Since the training is off-line and the deconvolution and credible band determination is generally on-line, the first approach is desirable.  In comparing the results for the two methods, we observed some, but generally not significant, differences.
\begin{table}[ht]
\centering
\begin{footnotesize}
\begin{tabular}{| c || c | c | c | c | c || c | c | c | c | c |}
 \hline
 \multicolumn{1}{|c||}{\textbf{Ep.}} & \multicolumn{5}{c||}{\textbf{BrAC}} & \multicolumn{5}{c|}{\textbf{Estimated BrAC}}\\
 \hline
  & \textbf{I} & \textbf{II} & \textbf{III} & \textbf{IV} & \textbf{V} & \textbf{I} & \textbf{II} & \textbf{III} & \textbf{IV} & \textbf{V}\\
 \hline
 \textbf{1} & 0.0520 & 0.7500 & 0.1019 & 0.0173 & 0.0693 & 0.0501 & 0.6333 & 0.1233 & 0.0075 & 0.0319\\
 \textbf{2} & 0.0230 & 0.5000 & 0.0163 & 0.0230 & 0.0460 & 0.0122 & 0.9333 & 0.0160 & 0.0086 & 0.0085\\
 \textbf{4*} & 0.0570 & 2.0000 & 0.1666 & 0.0258 & 0.0285 & 0.0571 & 2.9167 & 0.1653 & 0.0207 & 0.0166\\
 \textbf{6} & 0.0230 & 0.5833 & 0.0267 & 0.0162 & 0.0394 & 0.0174 & 1.2333 & 0.0228 & 0.0136 & 0.0121\\
 \textbf{7} & 0.0180 & 1.5833 & 0.0265 & 0.0196 & 0.0114 & 0.0120 & 1.4667 & 0.0192 & 0.0095 & 0.0068\\
 \textbf{8*} & 0.0170 & 0.8333 & 0.0154 & 0.0204 & 0.0204 & 0.0169 & 0.8167 & 0.0290 & 0.0067 & 0.0116\\
 \textbf{11} & 0.0480 & 0.9167 & 0.1181 & 0.0137 & 0.0524 & 0.0507 & 1.7000 & 0.0950 & 0.0244 & 0.0298\\
 \hline
\end{tabular}
\end{footnotesize}
\caption{Statistics for Example 7.1: I: Maximum value (percent alcohol), II: Time of maximum value (hours), III: Area underneath the curve (percent alcohol $\times$ hours), IV: Elimination rate (percent alcohol per hour), V: Absorption rate (percent alcohol per hour). }
\label{tab1}
\end{table}
In addition to the estimated continuous BrAC signals shown in the plots in the previous section, clinicians and alcohol researchers are interested in a number of statistics associated with the the BrAC for a drinking episode.   Specifically, for each drinking episode, they look at: I: The maximum or peak value of the BrAC, II: The time (since the start of the episode) at which the peak value of the BrAC is attained, III: the area underneath the episode's BrAC curve, IV: the BrAC elimination rate, and V: the BrAC absorption rate.   The BrAC elimination rate is defined to be the peak BrAC value divided by the the amount of time that elapses from the time at which the peak BrAC value was attained until the first zero BrAC measurement (more precisely the time at which the BrAC level first sinks below a predefined threshold), and the BrAC absorption rate is defined to be the peak BrAC value divided by the amount of time that elapses from the last zero BrAC measurement (more precisely the time at which the BrAC level first rises above the predefined threshold) until the time at which the peak BrAC value was attained. In Table \ref{tab1} we provide these statistics for the drinking episodes in Example 7.1 and in Table \ref{tab3} we provide them for each subject's drinking episode in Example 7.2. In Tables \ref{tab2} and \ref{tab4} we use our estimated BrAC surfaces and samples from the bivariate normal distribution corresponding to the parameters $\rho^*$ to compute 75\% credible intervals for each of the statistics.  Note that in all of these tables, the episodes and subjects without an asterisk (*) were used in training the two population models, while those marked with an asterisk were held back for cross validation. All of these results were computed using the first approach (BrAC a function of both time and $\cq_1$ and $\cq_2$).  We computed these statistics and their credible intervals using only the first approach. We did not make these computations using the second approach. 
\begin{table}[ht]
\centering
\begin{footnotesize}
\begin{tabular}{| c || c | c | c | c | c |}
 \hline
 \multicolumn{1}{|c||}{\textbf{Ep.}} & \multicolumn{5}{c|}{\textbf{Credible Intervals}}\\
 \hline
  & \textbf{I} & \textbf{II} & \textbf{III} & \textbf{IV} & \textbf{V}\\
 \hline
 \textbf{1} & [0.0286,0.0661] & [0.4167,0.7000] & [0.0644,0.1687] & [0.0060,0.0105] & [0.0203,0.0444]\\
 \textbf{2} & [0.0044,0.0177] & [0.6833,0.9833] & [0.0050,0.0238] & [0.0045,0.0131] & [0.0039,0.0123]\\
 \textbf{4*} & [0.0143,0.0858] & [2.6000,3.0333] & [0.0277,0.2586] & [0.0083,0.0326] & [0.0052,0.0242]\\
 \textbf{6} & [0.0034,0.0286] & [1.0000,1.3167] & [0.0031,0.0399] & [0.0040,0.0212] & [0.0037,0.0173]\\
 \textbf{7} & [0.0008,0.0208] & [1.2667,1.5333] & [0.0007,0.0349] & [0.0011,0.0156] & [0.0009,0.0105]\\
 \textbf{8*} & [0.0090,0.0230] & [0.6000,0.8833] & [0.0141,0.0405] & [0.0043,0.0094] & [0.0060,0.0154]\\
 \textbf{11} & [0.0164,0.0755] & [1.3667,1.8000] & [0.0238,0.1470] & [0.0103,0.0343] & [0.0122,0.0419]\\
 \hline
\end{tabular}
\end{footnotesize}
\caption{Credible intervals for Example 7.1: I: Maximum value (percent alcohol), II: Time of maximum value (hours), III: Area underneath the curve (percent alcohol $\times$ hours), IV: Elimination rate (percent alcohol per hour), V: Absorption rate (percent alcohol per hour).}
\label{tab2}
\end{table}
For each of the episodes in each example, we computed the percentage of BrAC statistics that fell within the approximating 75\% credible intervals.  For Example 7.1, we obtained the following results: Statistic I: 86\%, II: 14\%, III: 100\%, IV: 29\% and V: 0\%.  For Example 7.2, the following results were obtained: Statistic I: 100\%, II: 38\%, III: 88\%, IV: 88\% and V: 25\%.  The statistics that involved the time of the peak BrAC seemed to pose the most significant challenge for the method.
\begin{table}[ht]
\centering
\begin{footnotesize}
\begin{tabular}{| c || c | c | c | c | c || c | c | c | c | c |}
 \hline
 \multicolumn{1}{|c||}{\textbf{Sub.}} & \multicolumn{5}{c||}{\textbf{BrAC}} & \multicolumn{5}{c|}{\textbf{Estimated BrAC}}\\
 \hline
  & \textbf{I} & \textbf{II} & \textbf{III} & \textbf{IV} & \textbf{V} & \textbf{I} & \textbf{II} & \textbf{III} & \textbf{IV} & \textbf{V}\\
 \hline
 \textbf{1} & 0.0820 & 2.3333 & 0.3822 & 0.0132 & 0.0351 & 0.0918 & 4.2833 & 0.4333 & 0.0206 & 0.0175\\
 \textbf{8} & 0.0740 & 2.0500 & 0.2392 & 0.0180 & 0.0361 & 0.0422 & 1.8333 & 0.1229 & 0.0147 & 0.0149\\
 \textbf{9*} & 0.0640 & 3.0000 & 0.2568 & 0.0130 & 0.0213 & 0.0379 & 2.9833 & 0.1251 & 0.0124 & 0.0100\\
 \textbf{10} & 0.0750 & 0.3833 & 0.2590 & 0.0109 & 0.0542 & 0.0798 & 2.8333 & 0.2559 & 0.0278 & 0.0211\\
 \textbf{20} & 0.01020 & 1.6833 & 0.3820 & 0.0203 & 0.0606 & 0.0649 & 2.9333 & 0.2459 & 0.0146 & 0.0167\\
 \textbf{21*} & 0.0820 & 2.0000 & 0.2884 & 0.0178 & 0.0410 & 0.0660 & 2.3000 & 0.2412 & 0.0177 & 0.0200\\
 \textbf{22} & 0.0810 & 2.1000 & 0.2905 & 0.0143 & 0.0386 & 0.0714 & 2.7500 & 0.2733 & 0.0180 & 0.0191\\
 \textbf{23} & 0.0390 & 2.6500 & 0.1444 & 0.0089 & 0.0147 & 0.0387 & 2.6167 & 0.1160 & 0.0126 & 0.0109\\
 \hline
\end{tabular}
\end{footnotesize}
\caption{Statistics for Example 7.2: I: Maximum value (percent alcohol), II: Time of maximum value (hours), III: Area underneath the curve (percent alcohol $\times$ hours), IV: Elimination rate (percent alcohol per hour), V: Absorption rate (percent alcohol per hour).}
\label{tab3}
\end{table}
Our results suggest a number of open mathematical questions that deserve further consideration.  In our treatment here, although we evaluate them at specific values of $q_1$ and $q_2$, the surfaces we obtain for the estimated impulse response function and input and the associated convergence theory are in fact only $L_2$ making point-wise evaluation undefined.  We are currently looking at the introduction of some form of parabolic regularization \cite{LJL} into the $q$-dependence of the population model.  By doing this, it may become possible to obtain coercivity of the sesquilinear form \eqref{eq3.5} with respect to a stronger norm, and therefore obtain $H^1$-like well-posedness and convergence of the approximations in the $q$ dependence of the state, input, and output.  

We are also looking at eliminating the requirement that the measures, $\pi$, be defined in terms of a parameterized density.  By employing a different version of the Trotter-Kato semigroup approximation theorem (see \cite{BBC}), approximating subspaces with smoother elements, and results for the approximation of measures, we conjecture that we may be able to directly apply the approximation framework developed in \cite{BT} which involves the estimation of the underlying probability measures directly. Another extension in this same spirit would be to attempt to directly estimate the shape of the density rather than simply its parameters.  It seems that, in this case, existing results for the estimation of functional parameters in PDES may be directly applicable.  Also, one might consider a parameterization for the random parameters in the model in terms of their polynomial chaos expansions much as is done in \cite{GAS}.  

Of course, improved performance of the population model could potentially be obtained with higher fidelity models with higher dimensional parameterization; for example these might include the addition of an advection term, non-constant or functional coefficients (probably with finite dimensional parameterization), damped second order hyperbolic models (e.g. the telegraph equation) for diffusion with finite speed of propagation (see, for example, \cite{Okubo}). It would also be interesting to see if an analogous nonlinear theory could be developed for infinite dimensional systems governed by maximal monotone operators (see, for example, \cite{Barbu}).  

Finally, if alcohol biosensors were to be  incorporated into wearable health monitoring technology (e.g. the Fitbit and the Apple Watch), our approach would have to be modified to produce estimated BrAC in real time. This would be a challenge in light of the inherent latency of the human body's metabolism and transdermal secretion via perspiration of ethanol and the limitations of the analog hardware in the current state-of-the-art transdermal alcohol sensors.  We are currently looking at combining the ideas presented here with a \textit{look-ahead} prediction algorithm based on a hidden Markov model.
\begin{table}[t]
\centering
\begin{footnotesize}
\begin{tabular}{| c || c | c | c | c | c |}
 \hline
 \multicolumn{1}{|c||}{\textbf{Sub.}} & \multicolumn{5}{c|}{\textbf{Credible Intervals}}\\
 \hline
  & \textbf{I} & \textbf{II} & \textbf{III} & \textbf{IV} & \textbf{V}\\
 \hline
 \textbf{1} & [0.0497,0.1715] & [3.8833,4.5167] & [0.2235,0.8166] & [0.0121,0.0407] & [0.0103,0.0314]\\
 \textbf{8} & [0.0206,0.0830] & [1.5667,1.9667] & [0.0553,0.2457] & [0.0085,0.0308] & [0.0080,0.0280]\\
 \textbf{9*} & [0.0185,0.0745] & [2.6667,3.1333] & [0.0563,0.2499] & [0.0069,0.0260] & [0.0055,0.0185]\\
 \textbf{10} & [0.0398,0.1552] & [2.5500,2.9833] & [0.1200,0.5024] & [0.0162,0.0578] & [0.0114,0.0392]\\
 \textbf{20} & [0.0357,0.1203] & [2.6333,3.1000] & [0.1267,0.4635] & [0.0096,0.0284] & [0.0099,0.0297]\\
 \textbf{21*} & [0.0343,0.1257] & [1.9833,2.4667] & [0.1163,0.4683] & [0.0102,0.0356] & [0.0115,0.0363]\\
 \textbf{22} & [0.0365,0.1374] & [2.3667,2.9500] & [0.1293,0.5360] & [0.0111,0.0370] & [0.0109,0.0349]\\
 \textbf{23} & [0.0194,0.0753] & [2.3500,2.7500] & [0.0542,0.2283] & [0.0074,0.0258] & [0.0059,0.0203]\\
 \hline
\end{tabular}
\end{footnotesize}
\caption{Credible intervals for Example 7.2: I: Maximum value (percent alcohol), II: Time of maximum value (hours), III: Area underneath the curve (percent alcohol $\times$ hours), IV: Elimination rate (percent alcohol per hour), V: Absorption rate (percent alcohol per hour).}
\label{tab4}
\end{table}

\newpage
\section*{References}

\bibliographystyle{plain}




\end{document}